\newcommand{\R}{\ensuremath{\mathbb{R}}}
\newcommand{\cX}{\ensuremath{\mathcal{X}}}
\newcommand{\EE}{\ensuremath{\mathbb{E}}}
\newcommand{\N}{\ensuremath{\mathbb{N}}}
\newcommand{\cD}{\ensuremath{\mathcal{D}}}
\newcommand{\cR}{\ensuremath{\mathcal{R}}}
\newcommand{\PP}{\ensuremath{\mathbb{P}}}
\newcommand{\cB}{\ensuremath{\mathcal{B}}}
\newcommand{\fF}{\ensuremath{\mathfrak{F}}}
\newcommand{\fU}{\ensuremath{\mathfrak{U}}}
\newcommand{\w}{\ensuremath{\mathbf{w}}}
\newcommand{\cH}{\ensuremath{\mathcal{H}}}
\newcommand{\h}{\ensuremath{\mathbf{h}}}
\newcommand{\wm}{{w_{\textsc{min}}}}
\newcommand{\cG}{\ensuremath{\mathcal{G}}}
\newcommand{\fW}{\ensuremath{\mathfrak{W}}}
\newcommand{\cI}{\ensuremath{\mathcal{I}}}
\newcommand{\cO}{\ensuremath{\mathcal{O}}}
\newcommand{\Zh}{\overline{\ensuremath{\mathfrak{W}}}}
\def\ind{{\mathbbm{1}}}
\newtheorem{lemma}{Lemma}[section]
\newtheorem{theorem}[lemma]{Theorem}
\newtheorem{proposition}[lemma]{Proposition}
\newtheorem{remark}[lemma]{Remark}
\begin{document}

\title[Cutoff for non-backtracking random walks]{Cutoff for non-backtracking random walks on sparse random graphs}
\author{Anna Ben-Hamou and Justin Salez}
\address{}
\email{}

\keywords{}
\subjclass[2010]{}

\begin{abstract}
A finite ergodic Markov chain is said to exhibit cutoff if its distance to stationarity remains close to 1 over a certain number of iterations and then abruptly drops to near 0 on a much shorter time scale. Discovered in the context of card shuffling (Aldous-Diaconis, 1986), this phenomenon is now believed to be rather typical among fast mixing Markov chains. Yet, establishing it rigorously often requires a challengingly detailed understanding of the underlying chain. Here we consider non-backtracking random walks on random graphs with a given degree sequence. Under a general sparsity condition, we establish the cutoff phenomenon, determine its precise window, and prove that the (suitably rescaled) cutoff profile approaches a remarkably simple, universal shape. 
\end{abstract}

\maketitle

\section{Introduction}

\begin{figure}[h!]
\label{fig:cutoff}
\begin{center}
{\includegraphics[angle =0,width = 8cm]{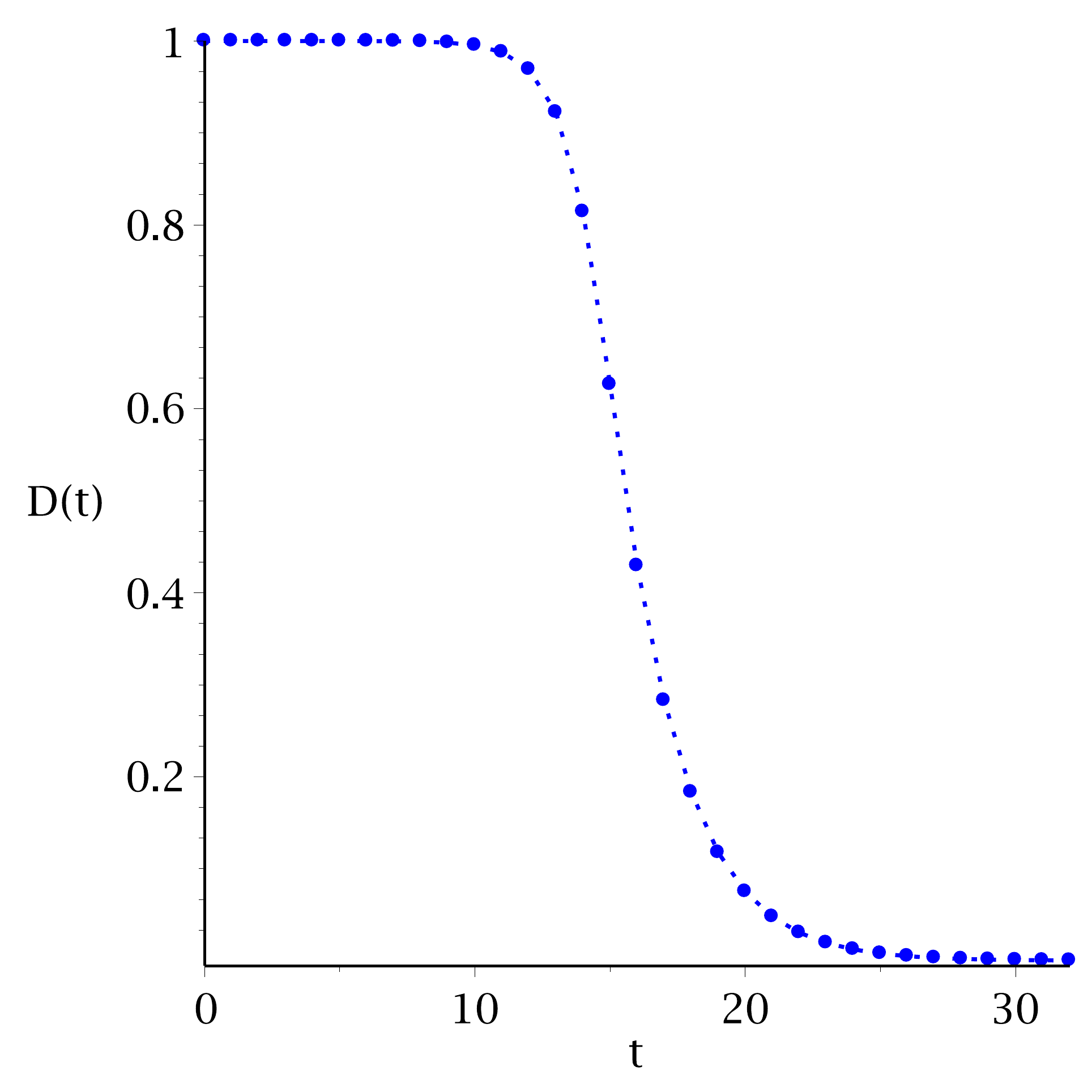}}
\caption{Distance to stationarity along time for the \textsc{nbrw} on a random graph with $10^6$ degree $3-$vertices  and $10^6$ degree $4-$vertices}
\end{center}
\end{figure}

\subsection{Setting.} Given a finite set $V$ and a function $\deg\colon V\to\{2,3,\ldots\}$  such that 
\begin{eqnarray}
\label{eq:N}
N & := & \sum_{v\in V}\deg(v)
\end{eqnarray}
is even, we construct a graph $G$ with vertex set $V$ and degrees $(\deg(v))_{v\in V}$  as follows. We  form  a set $\cX$ by  ``attaching" $\deg(v)$  \emph{half-edges} to each vertex $v\in V$: 
$$\cX:=\{(v,i)\colon v\in V,1\leq i\leq \deg(v)\}.$$
We then simply choose a \emph{pairing} $\pi$ on $\cX$ (i.e., an involution without fixed points), and interpret every pair of matched half-edges $\{x,\pi(x)\}$ as an edge between the corresponding vertices. Loops and multiple edges are allowed. %The configuration model is the random graph obtained by choosing $\pi$ uniformly at random among the $(N-1)!!$ possible pairings of $\cX$. 
\begin{figure}[h!]
\begin{center}
\boxed{\includegraphics[angle =0,width = 4.3cm]{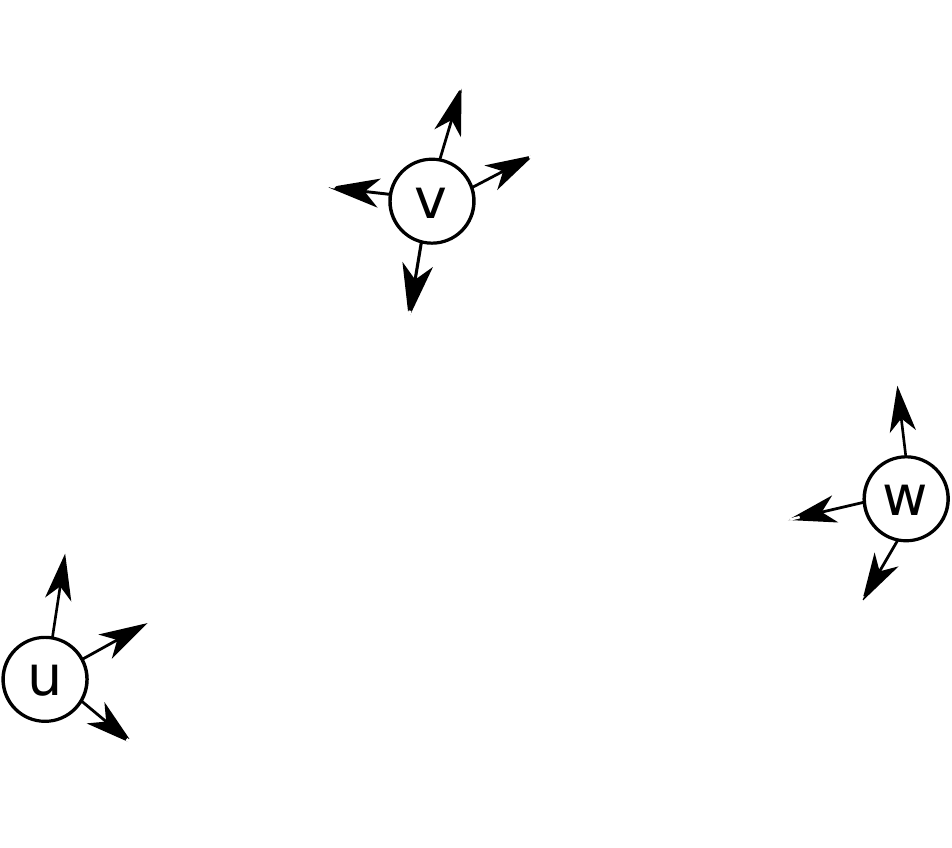}}
\!\!\boxed{\includegraphics[angle =0,width = 4.3cm]{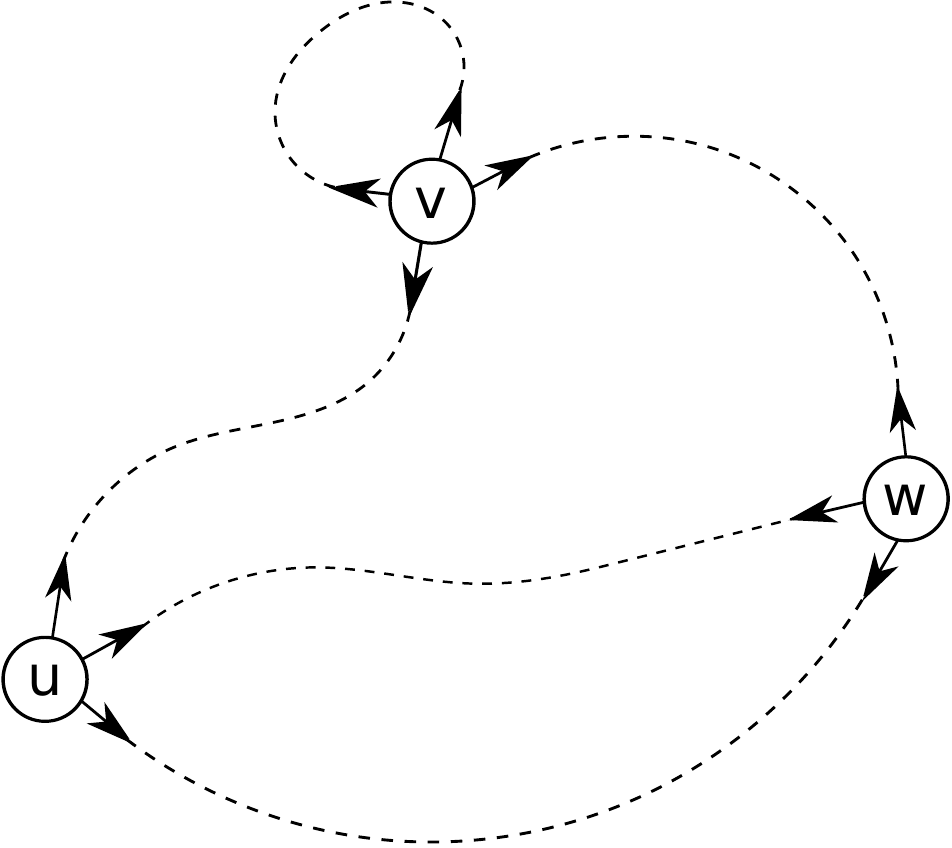}}
\!\!\boxed{\includegraphics[angle =0,width = 4.3cm]{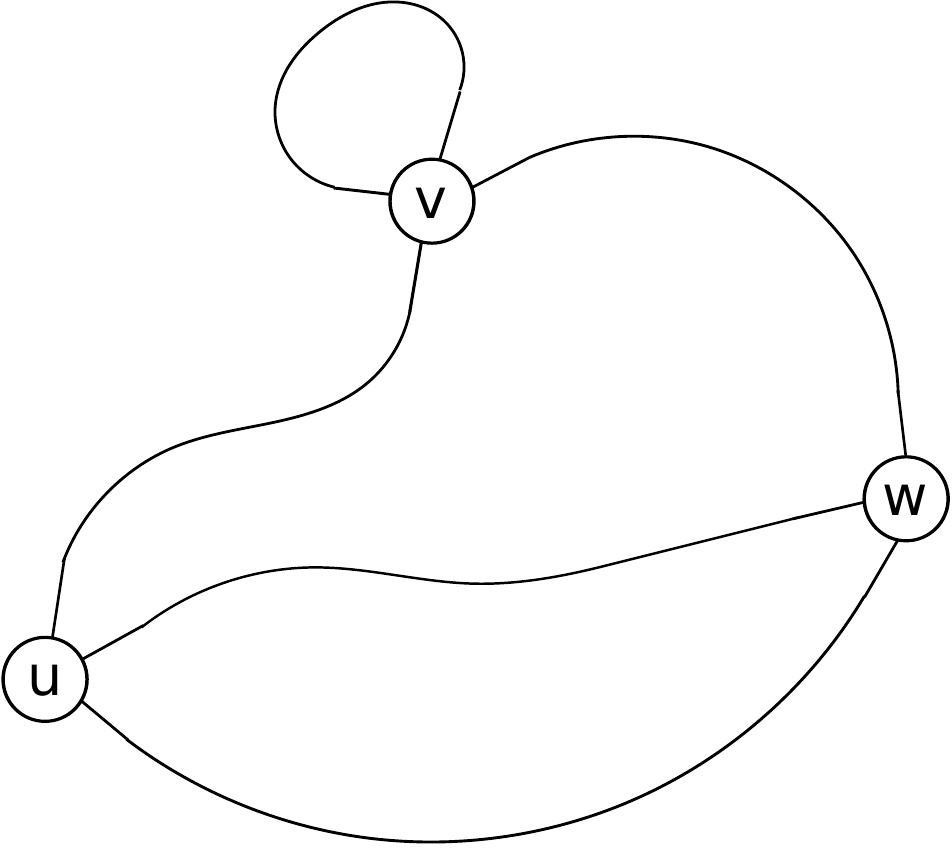}}
\caption{A set of half-edges $\cX$, a pairing $\pi$ and the resulting graph $G$}
\end{center}
\end{figure}

The \emph{non-backtracking random walk} (\textsc{nbrw}) on the graph $G=G(\pi)$ is a discrete-time Markov chain with state space $\cX$ and transition matrix
$$P(x,y)=
\left\{
\begin{array}{ll}
\frac{1}{\deg(\pi(x))} & \textrm{ if y is a neighbour of } \pi(x) \\
0 & \textrm{ otherwise}.
\end{array}
\right.
$$
In this definition and throughout the paper, two half-edges $x=(u,i)$ and $y=(v,j)$ are called \emph{neighbours} if $u=v$ and $i\neq j$, and we let $\deg(x):=\deg(u)-1$ denote the number of neighbours of the half-edge $x=(u,i)$. In words, the chain moves at every step from the current state $x$ to a uniformly chosen neighbour of $\pi(x)$. 
\begin{figure}[h!]
\begin{center}
{\includegraphics[angle =0,width = 4.5cm]{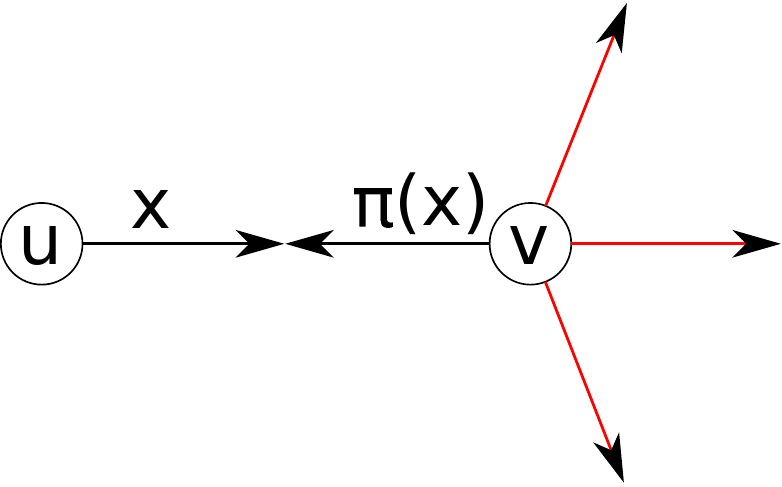}}
\caption{The non-backtracking moves from $x$ (in red)}
\end{center}
\end{figure}

Note that the matrix $P$ is symmetric with respect to $\pi$: for all $x,y\in\cX$, 
  \begin{eqnarray}
\label{sym}
P(\pi(y),\pi(x))=P(x,y).
\end{eqnarray}
In particular, $P$ is doubly stochastic: the uniform law on $\cX$ is invariant for the chain. The \emph{worst-case total-variation distance to equilibrium} at time $t\in\N$ is 
  \begin{eqnarray}
\label{cd}
\cD(t)  & := & \max_{x\in\cX}\left\{\frac{1}{2}\sum_{y\in\cX}\left|P^t(x,y)-\frac{1}{N}\right|\right\}.
\end{eqnarray}
$\cD$ is non-increasing, and the number of transitions that have to be made before it falls below a given threshold $0<\varepsilon<1$ is known as the \emph{mixing time}:
\begin{eqnarray*}
t_{\textsc{mix}}(\varepsilon) & := & \inf\left\{t\in\N\colon \cD(t)<\varepsilon\right\}.
\end{eqnarray*}

\subsection{Result.} The present paper is concerned with the typical profile of the function $t\mapsto \cD(t)$ under the so-called  \emph{configuration model}, i.e. when the pairing $\pi$ is chosen uniformly at random among the $(N-1)!!$ possible pairings of $\cX$. 
In order to study  large-size asymptotics, we let the vertex set $V$ and degree function $\deg\colon V\to\N$ depend on an implicit parameter $n\in\N$, which we omit from the notation for convenience. The same convention applies to all related quantities, such as $N$ or $\cX$. All asymptotic statements are understood as $n\to\infty$. Our interest is in the \emph{sparse regime}, where the number $N$ of half-edges diverges at a much faster rate than the maximum degree. Specifically, we assume that
\begin{eqnarray}
\label{assume:Delta}
\Delta :=\max_{v\in V}\deg(v) & = & N^{o(1)}.
\end{eqnarray}
As the behaviour of the \textsc{nbrw} at degree-2 vertices is deterministic, we will also assume, without much loss of generality, that
\begin{eqnarray}
\label{assume:delta}
\min_{v\in V}\deg(v)&\geq& 3.
\end{eqnarray}
Remarkably enough, the asymptotics in this regime depend on the degrees through two simple statistics: the mean logarithmic degree of an half-edge
%namely the function $f\colon\N\to[0,1]$ defined by
%\begin{eqnarray*}
%f(t):=\PP\left(D_1\cdots D_t \le N\right),
%\end{eqnarray*}
%where $\{D_i\}_{i\geq 1}$ are \textsc{iid} $\N-$valued random variables with the size-biased distribution associated to $\deg$.
\begin{eqnarray}\label{eq:mu}
\mu &:=& \frac{1}{N}\sum_{v\in V}\deg(v)\log\left(\deg(v)-1\right),  
\end{eqnarray}
and the corresponding variance
\begin{eqnarray}\label{eq:sigma}
\sigma^2 & := & \frac{1}{N}\sum_{v\in V}\deg(v)\left\{\log\left(\deg(v)-1\right)-\mu\right\}^2.
\end{eqnarray}
We will also need some control on the third absolute moment:
\begin{eqnarray}\label{eq:varrho}
\varrho & := & \frac{1}{N}\sum_{v\in V}\deg(v)\left|\log\left(\deg(v)-1\right)-\mu\right|^3.
\end{eqnarray}
It might help the reader to think of $\mu,\sigma$ and $\varrho$ as being fixed, or bounded away from $0$ and $\infty$. However, we only impose the following (much weaker) condition:
\begin{eqnarray}
\label{assume:sigma}
{\frac{\sigma^2}{\mu^{3}}} >> \frac{(\log\log N)^2}{{\log N}} & \textrm{ and } & \frac{\sigma^3}{\varrho\sqrt{\mu}}>>\frac 1{\sqrt{{\log N}}}.
\end{eqnarray}
Our main result states that on most graphs with degrees $(\deg(v))_{v\in V}$, the \textsc{nbrw} exhibits a remarkable behaviour, visible on Figure \ref{fig:cutoff} and known as a \emph{cutoff}: the distance to equilibrium remains close to $1$ for a rather long time, roughly  
\begin{eqnarray}
\label{eq:t_star}
t_\star:=\frac{\log N}{\mu},
\end{eqnarray}
and then abruptly drops to nearly $0$ over a much shorter time scale\footnote{The fact that $\omega_\star<<t_\star$ follows from condition (\ref{assume:Delta}).}, of order
\begin{eqnarray}
\label{eq:omega_star}
\omega_\star:=\sqrt{\frac{{\sigma^2\log N}}{\mu^{3}}}.
\end{eqnarray}
Moreover, the cutoff shape inside this window approaches  a surprisingly simple function $\Phi\colon\R\to[0,1]$, namely the tail distribution of the standard normal:
$$\Phi(\lambda):=\frac{1}{2\pi}\int_{\lambda}^\infty e^{-\frac {u^2}2}\,{\rm d} u.$$
It is remarkable that this limit shape does not depend at all on the precise degrees. 
%Expressed in terms of mixing times, our result takes the following simple form.
\begin{theorem}[Cutoff for the \textsc{nbrw} on sparse graphs]
\label{thm:main}
For every $0<\varepsilon<1$, 
\begin{eqnarray*}
\frac{t_{\textsc{mix}}(\varepsilon)-t_\star}{\omega_\star} & \xrightarrow[]{\PP} & {\Phi}^{-1}(\varepsilon).
\end{eqnarray*}
Equivalently, for $t=t_\star+\lambda\omega_\star+o(w_\star)$ with  $\lambda\in\R$ fixed, we have 
%\begin{eqnarray*}
$\cD(t)  \xrightarrow[]{\PP}  \Phi(\lambda).$
%\end{eqnarray*}
\end{theorem}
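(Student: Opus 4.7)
The plan is to reduce the computation of $\cD(t)$ to a central-limit statement about the sum of log-degrees collected along the walk, by coupling the walk with a sequential, on-the-fly construction of the random pairing $\pi$.

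\textbf{Step 1: Annealed exploration and tree-like regime.} Fix a starting half-edge $x_0\in\cX$ and jointly construct the trajectory $(X_s)_{s\geq 0}$ and the pairing $\pi$, revealing $\pi(X_s)$ only when the walk needs to use it. At each step, either $\pi(X_s)$ has already been exposed (a \emph{collision}) or it is drawn uniformly from the unpaired half-edges. A birthday-type union bound shows that no collision occurs during the first $t=t_\star+O(\omega_\star)$ steps with probability $1-o(1)$: by time $t$, at most $2t\ll\sqrt{N}$ half-edges have been exposed, the per-step collision probability is $O(\Delta t/N)$, and summing gives $O(\Delta t_\star^2/N)=o(1)$ under (\ref{assume:Delta}). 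On this event, the walk is self-avoiding and the transition weight factorises:
\begin{equation*}
P^t(x_0,X_t) \;=\; \prod_{s=0}^{t-1}\frac{1}{\deg(\pi(X_s))} \;=\; e^{-W_t},\qquad W_t:=\sum_{s=0}^{t-1}\log\deg(\pi(X_s)).
\end{equation*}

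\textbf{Step 2: CLT for $W_t$ and emergence of the Gaussian profile.} Under the annealed law and on the tree event, the sequence $(\deg(\pi(X_s)))_{s\geq 1}$ is close in total variation to an i.i.d.\ sample from the size-biased law $k\mapsto|\{v:\deg(v)=k\}|\,k/N$, whose logarithm has mean $\mu$, variance $\sigma^2$, and third absolute central moment $\varrho$. Berry--Esseen then yields
\begin{equation*}
\sup_{\lambda\in\R}\left|\PP\!\left(\frac{W_t-t\mu}{\sigma\sqrt{t}}\geq \lambda\right)-\Phi(\lambda)\right| \;\lesssim\; \frac{\varrho}{\sigma^3\sqrt{t}},
\end{equation*}
which is $o(1)$ for $t\asymp t_\star$ precisely under the second hypothesis in (\ref{assume:sigma}). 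Setting $t=t_\star+\lambda\omega_\star$ and exploiting the identity $\mu\omega_\star=\sigma\sqrt{t_\star}$,
\begin{equation*}
\log\!\bigl(NP^t(x_0,X_t)\bigr) \;=\; -\lambda\,\mu\omega_\star - (W_t-t\mu) \;\approx\; -\sigma\sqrt{t_\star}\,(\lambda+Z),\qquad Z\sim \mathcal{N}(0,1).
\end{equation*}
The first hypothesis in (\ref{assume:sigma}) guarantees $\sigma\sqrt{t_\star}\to\infty$, so $NP^t(x_0,X_t)$ either diverges or vanishes according to the sign of $\lambda+Z$. Plugging into the exact identity
\begin{equation*}
d_{TV}\bigl(P^t(x_0,\cdot),\mathrm{Unif}_\cX\bigr) \;=\; \EE_{X_t\sim P^t(x_0,\cdot)}\!\left[\left(1-\frac{1}{NP^t(x_0,X_t)}\right)_{\!+}\right],
\end{equation*}
bounded convergence then yields the annealed limit $d_{TV}\to\PP(Z<-\lambda)=\Phi(\lambda)$.

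\textbf{Step 3: From annealed to quenched, uniformly in the starting point.} I expect the main hurdle to be promoting the above (which holds in the joint law of the graph and the walk, for one fixed $x_0$) to the convergence in probability of $\cD(t)=\max_{x\in\cX}d_{TV}(P^t(x,\cdot),\mathrm{Unif}_\cX)$ with respect to the graph alone. The lower bound $\cD(t)\geq \Phi(\lambda)-o(1)$ should follow by applying Step 2 at a single typical $x_0$ and controlling $\mathrm{Var}_{\pi}(d_{TV}(P^t(x_0,\cdot),\mathrm{Unif}))$ via the edge-switching method for the configuration model. The upper bound $\cD(t)\leq \Phi(\lambda)+o(1)$ is more delicate: it must rule out the existence of \emph{any} starting half-edge producing an abnormally heavy $W_t$. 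I would sharpen Step 2 into an exponential tail bound on $|W_t-t\mu|$ (accessible because $\log\deg$ is bounded by $\log\Delta=o(\log N)$), take a union bound over $x\in\cX$, and separately control the event that the $t_\star$-neighbourhood of some $x$ contains a short cycle or an atypical concentration of high-degree vertices, which occurs with probability $o(1/N)$ under (\ref{assume:Delta}).
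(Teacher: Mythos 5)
There is a genuine gap, and it sits at the heart of the argument. In Step 1 you write $P^t(x_0,X_t)=\prod_{s}\frac{1}{\deg(\pi(X_s))}=e^{-W_t}$ on the event that the walk is self-avoiding. This conflates the probability of the \emph{one trajectory you traversed} with the $t$-step transition probability, which is a sum over \emph{all} non-backtracking paths of length $t$ from $x_0$ to $X_t$. Self-avoidance of your trajectory does not make that path unique: at $t\approx t_\star=\log N/\mu$ the ball of radius $t$ around $x_0$ covers essentially the whole graph, contains many cycles, and the expected number of length-$t$ paths between two typical half-edges is large. Indeed, the mechanism of mixing here is precisely the aggregation of many individually negligible paths, so the identity $NP^t(x_0,X_t)\approx Ne^{-W_t}$ is false in the direction you need. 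What survives is only the inequality $e^{-W_t}\le P^t(x_0,X_t)$, which makes your Step 2 a valid \emph{lower} bound on $\EE[\cD_x(t)]$ (and indeed this is essentially the paper's Section 3 argument, phrased there via a weight truncation $P^t_\theta$ rather than your exact identity; also no variance/switching control is needed for the lower bound, since a $[0,1]$-valued quantity with mean $\ge \Phi(\lambda)-o(1)$ that is also $\le\Phi(\lambda)+o_\PP(1)$ automatically converges in probability).

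For the upper bound your Step 3 does not engage with the real difficulty. An exponential tail bound on $W_t$ plus a union bound over $x$ controls the weight of a single walk, but $\cD_x(t)$ is not a functional of a single walk's weight: one must show $P^t(x,y)\ge (1-\Phi(\lambda)-o(1))/N$ for most $y$, \emph{uniformly} in $x$, and this requires summing the weights of the many paths joining $x$ to $y$ and proving that this sum concentrates. The paper does this by writing $P^t(x,\pi(y))=\sum_{u,v}P^{t/2}(x,u)P^{t/2}(y,v)\ind_{\pi(u)=v}$, growing two weight-truncated trees of depth $t/2$ from $x$ and $y$ by an exposure process, and then applying a concentration inequality (Stein's method for exchangeable pairs, Lemma \ref{prop:conc_ineq_matching}) to the uniform pairing connecting the two tree boundaries, with failure probability $o(1/N^2)$ so that a union bound over all pairs of roots is possible; the CLT then enters only through the total truncated weight $\fW$ of the two trees. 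Some device of this kind — a second-moment or matching-concentration argument over the ensemble of paths, not a tail bound on one path — is indispensable, and your proposal is missing it.
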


\subsection{Comments}
It is interesting to compare this with the $d-$regular case (i.e., $\deg\colon V\to\N$  constant equal to $d$) studied by \citet{lubetzky2010cutoff}: by a remarkably precise path counting argument, they establish cutoff within constantly many steps  around $t_\star=\log N/\log(d-1)$. 
To appreciate the effect of heterogeneous degrees, recall that  $\mu$ and $\sigma$ are the mean and variance of $\log D$, where $D$ is the degree of a uniformly sampled half-edge. Now, by Jensen's Inequality,
\begin{eqnarray*}
%\label{diam}
t_\star & \geq & \frac{\log N}{\log\EE[D]}\, ,
\end{eqnarray*}
and the less concentrated $D$, the larger the gap. The right-hand side is a well-known characteristic length in $G$, namely the typical inter-point distance (see e.g., \cite{remco}). One notable effect of heterogeneous degrees is thus that the mixing time becomes significantly larger than the natural graph distance. A heuristic explanation is as follows: in the regular case, all paths of length $t$ between two points are equally likely for the \textsc{nbrw}, and mixing occurs as soon as $t$ is large enough for many such paths to exist.  In the non-regular case however, different paths have very different weights, and most of them actually have a negligible chance of being  seen by the walk. Consequently, one has to make $t$ larger in order to see paths with a ``reasonable'' weight. Even more remarkable is the impact of heterogeneous degrees on the cutoff width $\omega_\star$, which satisfies $\omega_\star>>\log\log N$ against $\omega_\star=\Theta(1)$ in the regular case. Finally, the gaussian limit shape $\Phi$ itself is specific to the non-regular case and is directly related to the fluctuations of degrees along a typical trajectory of the \textsc{nbrw}.

\begin{remark}[Simple graphs]A classical result by \citet{janson2009probability} asserts that the graph produced by the configuration model is simple (no loops or multiple edges) with  probability asymptotically bounded away from $0$, as long as
\begin{equation}\label{eq:simple-graph}
\sum_{v\in V}\deg(v)^2=\cO(N)\, .
\end{equation}
Moreover, conditionally on being simple, it is uniformly distributed over all simple graphs with degrees $(\deg(v))_{v\in V}$. Thus, every property which holds {\bf whp} under the configuration model also holds {\bf whp} for the uniform simple graph model. In particular, under (\ref{eq:simple-graph}), the conclusion of Theorem \ref{thm:main} extends to simple graphs.
\end{remark}
\begin{remark}[\textsc{IID} degrees]
A common setting consists in generating an infinite \textsc{iid} degree sequence $(\deg(v))_{v\in\N}$ from some fixed degree distribution $Q$ and then restricting it to the index set $V=\{1,\ldots,n\}$ for each $n\geq 1$.  Let $D$ denote a random integer with distribution $Q$. Assuming that
$$
\begin{array}{ccc}
\PP\left(D\leq 2\right)=0, & 
\mathrm{Var}(D)>0, \textrm{ and }& \EE\left[e^{\theta D}\right]<\infty\textrm{ for some }\theta>0,
\end{array}
$$
ensures that the conditions (\ref{assume:Delta}), (\ref{assume:delta}) and (\ref{assume:sigma}) hold almost surely. Thus, Theorem \ref{thm:main} applies with the parameters $\mu,\sigma$ and $N$ now being random. But the latter clearly concentrate around their deterministic counterparts, in the following sense:
\begin{eqnarray*}
N&=& n\EE[D]+O_\PP\left(n^{\frac 12}\right)\\
\mu&=& \mu_\star+O_\PP\left(n^{-\frac 12}\right)\ \textrm{ with }\ \mu_\star={\EE[D\log(D-1)]}/{\EE[D]}\\
\sigma&=&\sigma_\star+O_\PP\left(n^{-\frac 12}\right)\ \textrm{ with }\ \sigma_\star^2={\EE\left[D\left\{\log(D-1)-\mu_\star\right\}^2\right]}/{\EE[D]}. 
\end{eqnarray*}
Those error terms are small enough to allow one to substitute $n,\mu_\star,\sigma_\star$ for $N,\mu,\sigma$ without affecting the convergence stated in Theorem \ref{thm:main}.
\end{remark}

\subsection{Related work.}

The first instances of the cutoff phenomenon were discovered in the early 80's by \citet{diaconis1981generating} and  \citet{aldous1983mixing}, in the context of card shuffling: given a certain procedure for shuffling a deck of cards, there exists a quite precise number of shuffles slightly below which the deck is far from being mixed, and slightly above which it is almost completely mixed. The term \emph{cutoff} and the general formalization appeared shortly after, in the seminal paper by \citet{aldous1986shuffling}. Since then, this remarkable behaviour has been identified in a variety of other contexts, see e.g., \citet{diaconis1996cutoff}, \citet{chen2008cutoff}, or the survey by \citet{saloff2004random} for random walks on finite groups. 

Interacting particle systems in statistical mechanics provide a rich class of dynamics displaying cutoff. One emblematic example is the \emph{stochastic Ising model} at high enough temperature, for which the cutoff phenomenon  has been established successively on the complete graph (\citet{levin2010glauber}), on lattices (\citet{ding2009mixing}, \citet{lubetzky2014cutoff}), and finally on any sequence of graphs (\citet{lubetzky2014universality}). Other examples include the \emph{Potts model} (\citet{cuff2012glauber}), the \emph{East process} (\citet{ganguly2013cutoff}), or the \emph{Simple Exclusion process} on the cycle (\citet{lacoin2015cutoff}).

The problem of singling out abstract conditions under which the cutoff phenomenon occurs, without necessarily pinpointing its precise location, has drawn considerable attention. 
In 2004, \citet{peresamerican} proposed a simple spectral criterion for reversible chains, known as the \emph{product condition}. Although counter-examples have quickly been constructed (see \citet[Chapter 18]{levin2009markov} and \citet[Section 6]{chen2008cutoff}), the condition is widely believed to be sufficient for ``most" chains. This has already been verified for birth-and-death chains (\citet{ding2010total}) and, more generally, for random walks on trees (\citet{basucharacterization}). The latter result relies on a promising characterization of cutoff in terms of the concentration of hitting times of ``worst'' (in some sense) sets. See also \citet{oliveira2012mixing}, \citet{sousi2013hitting}, \citet{griffiths2014tight} and \citet{hermon2015technical}.

Many natural families of Markov chains are now believed to exhibit cutoff. Yet, establishing this phenomenon rigorously requires a very detailed understanding of the underlying chain, and often constitutes a challenging task even in situations with a high degree of symmetry. The historical case of random walks on the symmetric group for example, is still far from being completely understood: see \citet{saloff2004random} for a list of  open problems, and \citet{berestycki2014cutoff} for a recent proof of one of them.

Understanding the mixing properties of random walks on sparse random graphs constitutes an important theoretical problem, with applications in a wide variety of contexts (see e.g., the survey by \citet{cooper2011random}). A classical result of \citet{broder1987expander}   states that random $d-$regular graphs with $d$ fixed are \emph{expanders} with high probability (see also  \citet{friedman2008proof}). In particular, the simple random walk (\textsc{srw}) on such graphs satisfies the product condition, and should therefore exhibit cutoff. This  long-standing conjecture was confirmed only  recently in an impressive work by \citet{lubetzky2010cutoff}, who also determined the precise cutoff window and profile. Their result is actually derived from the analysis of the \textsc{nbrw} itself, via a clever transfer argument.  Interestingly, the mixing time of the \textsc{srw} is $d/(d-2)$ times larger than that of the \textsc{nbrw}. This confirms the practical advantage of \textsc{nbrw} over \textsc{srw} for efficient network sampling and exploration, and complements a well-known spectral comparison for regular expanders due to \citet{alon2007non}, as well as a recent result by \citet{cooper2014vacant} on the cover time of random regular graphs. For other ways of speeding up random walks, see \citet{cooper2011random}.  

\iffalse
\begin{theorem}[\citet{lubetzky2010cutoff}]
\label{thm:lubetzky}
Let $G\sim \cG(n,d)$ be a random regular graph with $d\geq 3$ fixed. Then, for all $0<\varepsilon<1$, \textbf{whp}, 
\begin{enumerate}
\item the mixing time of the \textsc{nbrw} satisfies
\begin{displaymath}
t_\textsc{mix}^{(n)}(\varepsilon)=\log_{d-1}(dn) +\cO_\PP(1).
\end{displaymath}
\item the mixing time of the \textsc{srw} satisfies
\begin{displaymath}
t_\textsc{mix}^{(n)}(\varepsilon)=\frac{d}{(d-2)}\log_{d-1} n +\left(\Lambda+o(1)\right)\overline{\Phi}^{-1}(\varepsilon)\sqrt{\log_{d-1}n}\, ,
\end{displaymath}
where $\Lambda=\frac{2\sqrt{d(d-1)}}{(d-2)^{3/2}}$ and $\overline{\Phi}=1-\Phi$, with $\Phi$ the c.d.f. of the standard normal distribution.
\end{enumerate}
\end{theorem}
\fi

In the non-regular case however, the tight correspondence between the \textsc{srw} and the \textsc{nbrw} breaks down, and there seems to be no direct way of transferring our main result to the \textsc{srw}. We note that the latter should exhibit cutoff since the product condition holds, as can be seen from the fact that the \emph{conductance} of sparse random graphs with a given degree sequence remains bounded away from $0$ (see \citet{abdullah2012cover}).  Confirming this constitutes a challenging open problem. In particular, it would be interesting to see whether the \textsc{srw} still mixes faster than the \textsc{nbrw}. To the best of our knowledge, no precise conjectural expression for the mixing time of the \textsc{srw} has been put forward\footnote{During the finalization of the manuscript, a solution to this problem has been announced \cite{2015arXiv}.}.

\section{Proof outline}
\label{sec:outline}
The proof of Theorem \ref{thm:main} is divided into two (unequal) halves:  for
\begin{eqnarray}
\label{def:t}
t & = & t_\star+\lambda w_\star+o(w_\star),
\end{eqnarray}
with $\lambda\in\R$ fixed, we will show that
\begin{eqnarray}
\label{lowerbound}\EE\left[\cD\left(t\right)\right]& \geq & \Phi(\lambda) - o(1)\, ,\\
\label{upperbound}\cD\left(t\right) & \leq & {\Phi}(\lambda)+o_\PP(1).
\end{eqnarray}
The lower bound (\ref{lowerbound}) is proved in Section \ref{sec:lower-bound}. The difficult part is the upper bound (\ref{upperbound}), due to the maximization over all possible initial states.  Starting from state $x\in\cX$, the distance to stationarity is
\begin{eqnarray*}
\cD_x(t) & = & \sum_{y\in\cX} \left(\frac{1}{N}-P^t(x,y)\right)_+\\
& = &\sum_{y\in\cX} \left(\frac{1}{N}-P^t(x,\pi(y))\right)_+,
\end{eqnarray*}
since $\pi$ is an involution. Using the symmetry (\ref{sym}), we may re-write $P^t(x,\pi(y))$ as
\begin{eqnarray}
\label{eq:reversible}
P^t(x,\pi(y)) & = & \sum_{(u,v)\in \cX\times\cX} P^{t/2}(x,u)P^{t/2}(y,v) \ind_{\{\pi(u)=v\}} \, .
\end{eqnarray}
As a first approximation, let us assume that the balls of radius $t/2$ around $x$ and $y$ consist of disjoint trees, in agreement with Figure \ref{fig:tree}. 
\begin{figure}[h!]
\label{fig:tree}
\begin{center}
{\includegraphics[angle =0,width = 8cm]{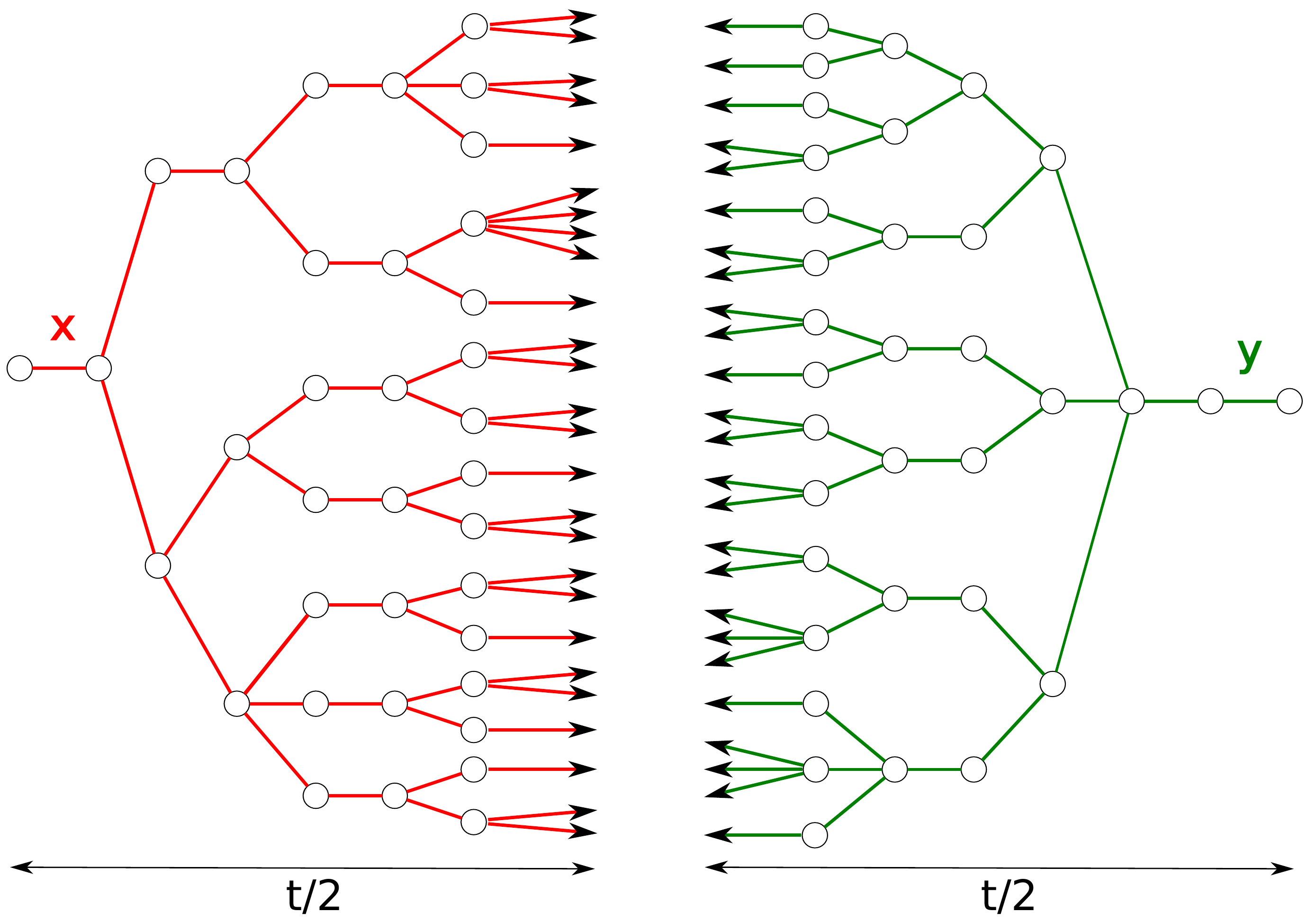}}
\caption{The tree-approximation}
\end{center}
\end{figure}
This is made rigorous by a particular exposure process described in Section \ref{sec:exposure-process}. The \emph{weight} $\w(u):=P^{t/2}(x,u)$ (resp. $\w(v):=P^{t/2}(y,v)$) can then be unambiguously written as the inverse product of degrees along the unique path from $x$ to $u$ (resp. $y$ to $v$).

A second approximation consists in eliminating those paths whose weight exceeds some given threshold $\theta > 0$ (the correct choice turns out to be $\theta\approx \frac 1N$):
\begin{displaymath}
P^t(x,\pi(y))\approx \sum_{u,v}\w(u)\w(v)\ind_{\w(u)\w(v)\leq \theta} \ind_{\{\pi(u)=v\}} \, .
\end{displaymath}
Conditionally on the two trees of height $t/2$, this is a weighted sum of weakly dependent Bernoulli variables, and the large-weight truncation should prevent it from deviating largely from its expectation. We make this argument rigorous in Section \ref{sec:completion}, using Stein's method of exchangeable pairs. Provided the exposure process did not reveal too many pairs of matched half-edges, the conditional expectation of $\ind_{\{\pi(u)=v\}}$ remains close to $1/N$, and we obtain the new approximation
\begin{displaymath}
N P^t(x,\pi(y))\approx \sum_{u,v}\w(u)\w(v)\ind_{\w(u)\w(v)\leq \theta} \, .
\end{displaymath}

Now, the right-hand side corresponds to the quenched probability (conditionally on the graph) that the product of the weights seen by two independent \textsc{nbrw} of length $t/2$, one starting from $x$ and the other from $y$, does not exceed $\theta$. The last step consists in approximating those trajectories by independent uniform samples from $\cX$, which we denote by $X_1^\star,\ldots,X_t^\star$. We obtain
\begin{eqnarray*}
\sum_{u,v}\w(u)\w(v)\ind_{\w(u)\w(v)\leq \theta}&\approx& \PP\left[\frac{1}{\deg(X_1^\star)}\cdots\frac{1}{\deg(X_t^\star)}\leq \theta\right]\\
&\approx& \PP\left[\frac{\sum_{k=1}^t(\mu-\log \deg(X^\star_k))}{\sigma\sqrt{t}}\leq \frac{\mu t+\log \theta}{\sigma\sqrt{t}}\right]\\
&\approx & 1-\Phi(\lambda)\, ,
\end{eqnarray*}
by the central limit theorem, since $\theta\approx 1/N$ and $t\approx t_\star+\lambda\omega_\star$. Consequently, 
\begin{eqnarray*}
\sum_y \left(\frac{1}{N}-P^t(x,\pi(y))\right)_+&\approx& \Phi(\lambda)\, ,
\end{eqnarray*}
as desired. This argument is made rigorous in Sections \ref{sec:small}, \ref{sec:large} and \ref{sec:collisions}. 

\section{The lower bound}\label{sec:lower-bound}

Fix $t\geq 1$, two states  $x,y\in\cX$, and a parameter $\theta\in(0,1)$. Let $P^t_\theta(x,y)$ denote the contribution to $P^t(x,y)$ from paths having weight less than $\theta$. 
Note that $P^t_\theta(x,y)< P^t(x,y)$ if and only if some path of length $t$ from $x$ to $y$ has weight larger than $\theta$, implying in particular that $P^t(x,y)>\theta$. Thus,
\begin{eqnarray*}
\frac{1}{N}-P^t_\theta(x,y) & \leq & \left(\frac{1}{N}-P^t(x,y)\right)_+ 
+\frac{1}{N}{\bf 1}_{P^t(x,y)>\theta}.
\end{eqnarray*}
Summing over all $y\in\cX$ and observing that there can not be more than $1/\theta$ half-edges $y\in\cX$ satisfying $P^t(x,y)>\theta$, we obtain
\begin{eqnarray*}
1-\sum_{y\in\cX}P^t_\theta(x,y) & \leq  & \cD_x(t) + \frac{1}{\theta N}.
\end{eqnarray*}
%Observing that $\overline{\Phi}(\lambda)+\Phi(\lambda)=1$ by symmetry of the normal distribution, we deduce that
%\begin{eqnarray*}
%\EE\left[(\cD_x(t)-\overline{\Phi}(\lambda))_-\right] & \leq  & \frac{1}{\theta N}+\EE\left[\left|\Phi(\lambda)-\sum_{y\in\cX}Z_\theta(y)\right|\right].
%\end{eqnarray*}
%Taking $\theta=\frac{\log N}{N}$ and applying Lemma ???, we see that 
%\begin{eqnarray*}
%\EE\left[(\cD_x(t)-\Phi(\lambda))_-\right] & =  & o(1).
%\end{eqnarray*}
Now, the left-hand side is the quenched probability (i.e., conditional on the underlying pairing) that a \textsc{nbrw} $\{X_k\}_{0\leq k \leq t}$ starting at $x$ satisfies $\prod_{k=1}^t\frac{1}{\deg(X_k)}> \theta$.
Taking expectation w.r.t. the pairing, we arrive at
\begin{eqnarray}
\label{lb:1}
\PP\left(\prod_{k=1}^t\frac{1}{\deg(X_k)}> \theta\right) & \leq & \EE[\cD_x(t)] + \frac{1}{\theta N},
\end{eqnarray}
where the average is now over both the \textsc{nbrw} and the pairing (annealed law). 
A useful property of the uniform pairing is that it can be constructed sequentially, the pairs being revealed along the way, as we need them. We exploit this degree of freedom to generate  the walk $\{X_k\}_{k\geq 0}$ and the pairing simultaneously, as follows. Initially, all half-edges are unpaired and $X_0=x$; then at each time $k\geq 1$, 
\begin{enumerate}
\item if $X_{k-1}$ is unpaired, we pair it with a uniformly chosen other unpaired half-edge; otherwise, $\pi(X_{k-1})$ is already defined and no new pair is formed.
\item in both cases, we let $X_{k}$ be a uniformly chosen neighbour of $\pi(X_{k-1})$.
\end{enumerate}
 The sequence $\{X_k\}_{k\geq 0}$ is then exactly distributed according to the annealed law. Now, if we sample uniformly from $\cX$ instead of restricting the random choice made at (i) to unpaired half-edges, the uniform neighbour chosen at step (ii) also has the uniform law on $\cX$. This creates a coupling between the process $\{X_k\}_{k\geq 1}$ and a sequence $\{X_k^\star\}_{k\geq 1}$ of \textsc{iid} samples from $\cX$, valid until the first time $T$ where the uniformly chosen half-edge or its uniformly chosen neighbour is already paired. As there are less than $2k$ paired half-edges by step $k$, a crude union-bound yields
$$\PP\left(T\leq t\right)\leq \frac{2t^2}{N}.$$
Consequently, 
\begin{eqnarray}
\label{lb:2}
\left|\PP\left(\prod_{k=1}^t\frac{1}{\deg(X_k)}> \theta\right) - \PP\left(\prod_{k=1}^t\frac{1}{\deg(X_k^\star)}> \theta\right)\right| & \leq & \frac{2t^2}{N}.
\end{eqnarray}
On the other hand, since $\{X_1^\star,\ldots,X_t^\star\}$ are \textsc{iid}, Berry-Esseen's inequality implies
\begin{eqnarray}
\label{lb:3}
\left|\PP\left(\prod_{k=1}^t\frac{1}{\deg(X_k^\star)}> \theta\right)-\Phi\left(\frac{\mu t+\log\theta}{\sigma\sqrt t}\right)\right| & \leq & \frac{\varrho}{\sigma^3\sqrt{t}}.
\end{eqnarray}
We may now combine (\ref{lb:1}), (\ref{lb:2}), (\ref{lb:3}) to obtain
\begin{eqnarray*}
\EE[\cD_x(t)] & \geq &  \Phi\left(\frac{\mu t+\log\theta}{\sigma\sqrt t}\right) - \frac{1}{\theta N}-\frac{2t^2}{N}- \frac{\varrho}{\sigma^3\sqrt{t}}.
\end{eqnarray*}
With $t$ as in (\ref{def:t}) and $\theta=(\log N)/N$, the right-hand side is $\Phi(\lambda)+o(1)$, thanks to our assumptions on $\mu,\sigma,\varrho$. This establishes the lower bound (\ref{lowerbound}).

\section{The upper-bound}

Following \citet{lubetzky2010cutoff}, we call $x\in\cX$ a \emph{root} (written $x\in\cR$) if the (directed) ball of radius $h$ centered at $x$ (denoted by $\cB_x$) is a tree, where  
\begin{eqnarray}
\label{def:h}
h & := & \left\lfloor\frac{\log N}{10\log \Delta}\wedge\log\log N\right\rfloor.
\end{eqnarray}
Note that $1<<h<<\omega_\star$ by assumptions (\ref{assume:Delta}) and (\ref{assume:sigma}).
The first proposition below shows that we may restrict our attention to paths  between roots. 
The second proposition provides a good control on such paths. 
\begin{proposition}[Roots are quickly reached] 
\label{pr:root} 
\begin{eqnarray*}
\max_{x\in\cX}P^h(x, \cX\setminus\cR) & \xrightarrow[]{\PP} & 0.
\end{eqnarray*}
\end{proposition}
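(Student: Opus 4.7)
The plan is to combine the exposure coupling of Section~\ref{sec:lower-bound} with a $k$-th moment estimate and a union bound over the $N$ starting points.

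\textbf{Step 1 (first moment).} For fixed $x \in \cX$, I would extend the exposure coupling by jointly generating the pairing, the NBRW trajectory $X_0, \ldots, X_h$ from $x$, and a breadth-first exploration of the forward $h$-ball $\cB_{X_h}$ of its endpoint. The total number of exposed half-edges is at most $h + 2\Delta^h = O(\Delta^h)$, so the annealed probability that any pairing closes a cycle is $O(\Delta^{2h}/N)$. On the complementary event, $\cB_{X_h}$ is a subtree of the exposed forest, hence $X_h \in \cR$. Thanks to~(\ref{def:h}), this gives $\EE[P^h(x, \cR^c)] = O(\Delta^{2h}/N) = O(N^{-4/5 + o(1)})$.

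\textbf{Step 2 (higher moments and union bound).} For integer $k \geq 1$,
\[
\EE[P^h(x, \cR^c)^k] = \PP\bigl(X_h^{(1)}, \ldots, X_h^{(k)} \in \cR^c\bigr),
\]
where $X^{(1)}, \ldots, X^{(k)}$ are independent copies of the NBRW from $x$ sharing the same pairing. I would decompose this annealed probability according to the coalescence pattern of the $k$ walks. The dominant contribution, under~(\ref{def:h}), comes from the fully-coalesced pattern, where all $k$ walks follow the same trajectory. Since $\deg(X_j) \geq 2$, one has $\max_y P^h(x,y) \leq 2^{-h}$ whenever $\cB_x$ is a tree, and hence
\[
\PP(\text{all $k$ walks coincide at time $h$}) = \sum_y P^h(x,y)^k \leq \max_y P^h(x,y)^{k-1} \leq 2^{-(k-1)h}.
\]
Combined with Step~1, this yields $\EE[P^h(x, \cR^c)^k] = O\bigl(2^{-(k-1)h} \cdot \Delta^{2h}/N\bigr)$. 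Applying Markov's inequality with this $k$-th moment and a union bound over $x \in \cX$,
\[
\PP\bigl(\max_x P^h(x, \cR^c) > \delta\bigr) = O\bigl(2^{-(k-1)h}\, \Delta^{2h}\, \delta^{-k}\bigr).
\]
Setting $\delta = c^{-h}$ with $1 < c < 2$ and taking $k$ large enough that $c^k \cdot 2^{-(k-1)} \Delta^2 < 1$, the right-hand side is of the form $A^{-h}$ with $A > 1$ and therefore tends to $0$ as $h \to \infty$, which follows from~(\ref{def:h}).

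\textbf{Main obstacle.} The main technical point is identifying the fully-coalesced pattern as the dominant contribution to $\EE[P^h(x,\cR^c)^k]$. This amounts to a combinatorial case analysis over partitions of $\{1, \ldots, k\}$: for each partition one balances the probability of the prescribed coalescence pattern (smaller when more walks must coincide) against the cycle probability required to place the distinct endpoints in $\cR^c$ (larger when more independent cycles are needed). Under the constraint $h \leq \log N/(10 \log \Delta)$, the former effect always dominates, so the desired uniform-in-$x$ estimate reduces to a first-moment computation on coalesced walks.
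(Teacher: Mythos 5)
Your Step 1 is fine, but the heart of your argument --- the claim that the fully-coalesced pattern dominates $\EE[P^h(x,\cR^c)^k]$, justified by the idea that distinct endpoints require independent cycles --- rests on a false premise. A single cycle $\mathcal{C}$ close to $x$ can make many \emph{distinct} endpoints non-roots simultaneously: every walk that, for all $h$ steps, keeps moving towards or along $\mathcal{C}$ ends at a half-edge whose directed $h$-ball still contains $\mathcal{C}$, and these $k$ trajectories need not coincide at all (they may traverse the cycle in different directions, approach it along different branches, etc.). So the non-coalesced patterns are \emph{not} penalized by extra factors of $\Delta^{2h}/N$; what keeps them small is that each walk individually pays a factor of roughly $2^{-h}$ for never making an ``away'' move --- and proving that is exactly the quenched estimate at the core of the paper's proof. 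In other words, to control the off-diagonal patterns you would have to rederive the deterministic bound $P^h(x,\cX\setminus\cR)\leq 2^{1-h}$ anyway, at which point the moment method is superfluous. There is also a quantitative gap at the end: even granting your moment bound, closing the union bound requires $c^k2^{-(k-1)}\Delta^2<1$, hence $k\gtrsim\log\Delta$, and $\Delta$ is allowed to diverge under (\ref{assume:Delta}); with $k=k_n\to\infty$ your ``combinatorial case analysis over partitions of $\{1,\ldots,k\}$'' involves a super-exponentially growing number of terms and needs uniform constants, none of which is addressed, and additive error terms (e.g.\ from balls containing two or more cycles) get amplified by $\delta^{-k}$.

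The paper's route avoids all of this by decoupling the randomness of the graph from the randomness of the walk. One first shows, by a union bound over $x$ with a per-$x$ failure probability $o(1/N)$, that with high probability \emph{every} ball of radius $r=\lfloor\log N/(5\log\Delta)\rfloor$ contains at most one cycle. On any graph with that property one then proves the \emph{deterministic} estimate $P^h(x,\cX\setminus\cR)\leq 2^{1-h}$, valid simultaneously for all $x$: once the walk makes a step increasing its distance to the unique cycle, non-backtracking forces its directed ball to point away from the cycle forever, so it is at a root from then on; and by (\ref{assume:delta}) each step is an ``away'' step with conditional probability at least $1/2$. If you want to salvage a moment-method proof, you should first establish this structural dichotomy --- it is the one idea your proposal is missing.
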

\begin{proposition}[Roots are well inter-connected]  For $t$ as in (\ref{def:t}),
\label{pr:up}
$$\min_{x\in\cR}\min_{y\in\cR\setminus\cB_x}P^t(x,\pi(y))\geq \frac{1-\Phi(\lambda)-o_\PP(1)}{N}.$$
\end{proposition}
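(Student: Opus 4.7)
The plan is to closely follow the four-step heuristic sketched in Section \ref{sec:outline}. Fix roots $x, y \in \cR$ with $y \notin \cB_x$, and assume for convenience that $t$ is even. Using $\pi^2=\mathrm{id}$ together with the symmetry (\ref{sym}), one has the decomposition
\[
P^t(x,\pi(y)) \;=\; \sum_{u,v \in \cX} \w_x(u)\, \w_y(v)\, \ind_{\{\pi(u)=v\}},
\]
where $\w_x(u):=P^{t/2}(x,u)$ and $\w_y(v):=P^{t/2}(y,v)$. The first step, carried out via the exposure process of Section \ref{sec:exposure-process}, is to reveal jointly the disjoint tree-balls $\cB_x,\cB_y$ and then iteratively expose only those half-edges visited by NBRW trajectories of length up to $t/2$ emanating from their leaves, matching pairs as they are needed. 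On the (likely) event that these depth-$t/2$ neighbourhoods remain disjoint trees, $\w_x(u)$ reduces to the inverse product of degrees along the unique non-backtracking path from $x$ to $u$, and similarly for $\w_y(v)$.

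The second step is large-weight truncation. Fix a threshold $\theta$ of order slightly larger than $1/N$ and set
\[
S_\theta \;:=\; \sum_{u,v} \w_x(u)\, \w_y(v)\, \ind_{\w_x(u)\w_y(v) \leq \theta}\, \ind_{\{\pi(u)=v\}} \;\leq\; P^t(x,\pi(y)).
\]
Note that $\sum_v \w_y(v)\,\ind_{\w_y(v)>\sqrt\theta}$ is the quenched probability that a NBRW of length $t/2$ from $y$ accumulates a weight exceeding $\sqrt\theta$; coupling with IID uniform samples and invoking Berry-Esseen exactly as in Section \ref{sec:lower-bound} makes this negligible \textbf{whp}. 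Thus truncating to $S_\theta$ costs only $o_\PP(1/N)$. This step will occupy Sections \ref{sec:small} and \ref{sec:large}.

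The third step, which is the heart of the argument, is to show that, conditionally on the two exposed trees, $S_\theta$ concentrates around its conditional expectation
\[
\frac{1+o(1)}{N}\sum_{u,v} \w_x(u)\,\w_y(v)\, \ind_{\w_x(u)\w_y(v) \leq \theta}.
\]
Indeed, given the (few) pairs of $\pi$ revealed during the exposure, each remaining collision indicator $\ind_{\{\pi(u)=v\}}$ has conditional mean $(1+o(1))/N$. I would then establish a matching variance bound via Stein's method of exchangeable pairs (Section \ref{sec:completion}), the exchangeable pair being obtained by re-sampling a uniformly chosen pair of the conditioned pairing. The truncation is essential here: it caps each summand by $\theta$, keeping the quadratic terms under control.

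Finally, the deterministic tree-sum appearing above is precisely the quenched probability that two independent NBRW trajectories of length $t/2$ from $x$ and $y$ produce weights whose product lies below $\theta$. Coupling each trajectory with IID uniform samples from $\cX$ as in (\ref{lb:2}) and applying Berry-Esseen as in (\ref{lb:3}), one recovers $1-\Phi(\lambda)+o(1)$ for $t$ as in (\ref{def:t}) and $\theta$ chosen so that $\log\theta\approx-\log N$ matches the appropriate Gaussian quantile; this is Section \ref{sec:collisions}. The main obstacle is the Stein-type concentration at step three: the bound must hold with enough residual probability to survive a union bound over the $O(N^2)$ pairs $(x,y)\in\cR\times(\cR\setminus\cB_x)$, which forces a delicate choice of the truncation level $\theta$ and a careful control of the exchangeable-pair variance.
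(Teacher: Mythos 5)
Your outline reproduces the paper's own heuristic from Section \ref{sec:outline}, but two of the steps you treat as routine are exactly where the work lies, and one of them rests on a false premise. First, the event that the depth-$t/2$ neighbourhoods of $x$ and $y$ are disjoint trees is \emph{not} likely: since $\mu t/2\approx\tfrac12\log N$, each neighbourhood carries on the order of $\sqrt N$ half-edges, so by a birthday-paradox count collisions occur with probability bounded away from $0$ for a given pair, and certainly occur for many of the $\Theta(N^2)$ pairs over which the minimum is taken. The paper never conditions on this event; instead the exposure process \emph{prunes} every half-edge that would create a collision (step 3 of Section \ref{sec:exposure-process}), and the real content is Lemma \ref{lm:collisions}: a martingale argument via Freedman's inequality showing that the total weight surviving the pruning is at least $2-\varepsilon$, with failure probability $o(N^{-2})$. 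Your proposal has no substitute for this, and without it the ``deterministic tree-sum'' in your last step need not be $1-\Phi(\lambda)+o(1)$ --- indeed you cannot even write the weights as inverse degree-products along unique paths. (Incidentally, Section \ref{sec:collisions} is this collision-control lemma, not the CLT step you attribute to it.)

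Second, every approximation in the chain --- not only the Stein concentration --- must fail with probability $o(N^{-2})$ to survive the union bound over pairs $(x,y)$. A single application of the coupling plus Berry--Esseen as in (\ref{lb:2})--(\ref{lb:3}) controls an \emph{annealed} probability; it yields no quenched statement at rate $o(N^{-2})$. The paper obtains the required rate by an amplification trick, running $m\approx\log N$ independent walks and using $\PP(A\mid\pi)\geq Z^m$ (Lemmas \ref{lm:small} and \ref{lm:large}); your proposal is silent on this. Two smaller points. Your claim that the truncation costs $o_\PP(1/N)$ is wrong: the probability that a length-$t/2$ walk accumulates weight exceeding $\sqrt\theta\approx N^{-1/2}$ is of constant order, and the truncated mass is precisely the $\Phi(\lambda)$ appearing in the statement (harmless for the lower bound, since you only drop nonnegative terms, but it shows the bookkeeping is off). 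And $\theta$ must be taken somewhat \emph{smaller} than $1/N$ --- the paper uses $\theta=1/(N(\log N)^2)$ --- because the exchangeable-pair tail bound $\exp\{-a^2/(4\theta m)\}$ with $a\asymp\varepsilon/N$ beats the union bound only when $N\theta\log N\to0$; a threshold ``slightly larger than $1/N$'' would make that exponent $O(1)$.
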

Let us first see how those results imply the upper-bound (\ref{upperbound}). Observe that
\begin{eqnarray*}
\cD(t+h)&\leq & \max_{x\in\cX}P^h\left(x,\cX\setminus\cR\right)+\max_{x\in \cR}\cD_x(t)\, .
\end{eqnarray*}
The first term is $o_\PP(1)$ by Proposition \ref{pr:root}. For the second one, we write
\begin{eqnarray*}
\cD_x(t) & = & \sum_{y\in \cR\setminus\cB_x}\left(\frac{1}{N}-P^t(x,\pi(y))\right)_++\sum_{y\in \cB_x\cup(\cX\setminus\cR)}\left(\frac{1}{N}-P^t(x,\pi(y))\right)_+.
\end{eqnarray*}
Proposition \ref{pr:up} ensures that the first sum is bounded by $\Phi(\lambda)+o_\PP(1)$ uniformly in $x\in\cR$. To see that the second sum is $o_\PP(1)$ uniformly in $x\in\cR$, it suffices to bound its summands by $1/N$ and observe that $|\cB_x|  \leq  \Delta^h = o( N)$ by (\ref{def:h}), while
\begin{eqnarray*}
|\cX\setminus\cR| & = & \sum_{x\in\cX}P^h(x,\cX\setminus\cR),
\end{eqnarray*}
($P$ is doubly stochastic), which is $o_\PP(N)$ uniformly in $\cX$ by Proposition \ref{pr:root}.

\begin{proof}[Proof of Proposition \ref{pr:root}]
Define $r:=\left\lfloor\frac{\log N}{5\log \Delta}\right\rfloor$ and fix $x\in\cX$. The ball of radius $r$ around $x$ can be generated sequentially, its half-edges being paired one after the other with uniformly chosen other unpaired half-edges, until the whole ball has been paired. Observe that at most $k=\frac{\Delta\left((\Delta-1)^{r}-1\right)}{\Delta-2}$ pairs are formed. Moreover, for each of them, the number of unpaired half-edges having an already paired neighbour is at most  $\Delta(\Delta-1)^r$ and hence the conditional chance of hitting such an half-edge (thereby creating a cycle) is at most $p=\frac{\Delta(\Delta-1)^r-1}{N-2k-1}$. Thus, the probability that more than one cycle is found is at most
\begin{eqnarray*}
(kp)^2 & = & O\left(\frac{(\Delta-1)^{4r}}{N^2}\right)\ = \ o\left(\frac{1}{N}\right).
\end{eqnarray*}
Summing over all $x\in\cX$ (union bound), we obtain that with high probability, no ball of radius $r$ in $G(\pi)$ contains more than one cycle. 

To conclude the proof, we now fix a pairing $\pi$ with the above property, and we prove that the \textsc{nbrw} on $G(\pi)$ starting from any $x\in\cX$ satisfies
\begin{eqnarray}
\label{toshow}
\PP\left(X_t\textrm{ is not a root}\right) & \leq & 2^{1-t},
\end{eqnarray}
for all $t\leq r-h$. The claim is trivial if the ball of radius $r$ around $x$ is acyclic. Otherwise, it contains a single cycle $\mathcal{C}$, by assumption. Write $d(z,\mathcal{C})$ for the minimum length of a non-backtracking path from $x$ to some $z\in \mathcal{C}$. 
The non-backtracking property ensures that if $d(X_{t},\mathcal{C})<d(X_{t+1},\mathcal{C})$ for some $t<r-h$, then $X_{t+1},X_{t+2},\ldots,X_{r-h}$ are all roots. By (\ref{assume:delta}), the conditional chance that  $d(X_{t+1},\mathcal{C})=d(X_t,\mathcal{C})+1$ given the past is at least $1/2$ (unless  $d(X_t,\mathcal{C})=1$, which can only happen once). This shows (\ref{toshow}). We then specialize to $t=h$.
\end{proof}

\section{The exposure process}\label{sec:exposure-process} 
The remainder of the paper is devoted to the proof of Proposition \ref{pr:up}. Fix two distinct half-edges $x,y\in\cX$. We  describe a two-stage procedure that generates a uniform pairing on $\cX$ together with a rooted forest $\fF$ keeping track of certain paths from $x$ and $y$.  Initially, all half-edges are unpaired and $\fF$ is reduced to its two roots, $x$ and $y$. We then iterate the following three steps:
\begin{enumerate}
\item[1.] An unpaired half-edge $z\in\fF$ is selected according to some rule (see below).
\item[2.] $z$ is paired with a uniformly chosen other unpaired half-edge $z'$.
\item[3.] If neither $z'$ nor any of its neighbours was already in $\fF$, then all neighbours of $z'$ become children of $z$ in the forest $\fF$. 
 \end{enumerate}
The \emph{exploration stage} stops when no unpaired half-edge is compatible with the selection rule. We then complete the pairing by matching all the remaining unpaired half-edges uniformly at random: this is the \emph{completion stage}. 

The condition in step $3$ ensures that $\fF$ remains a forest: any $z\in\fF$ determines a unique sequence $(z_0,\ldots,z_h)$ in $\fF$  such that $z_0$ is a root, $z_{i}$ is a child of $z_{i-1}$ for each $1\leq i\leq h$, and $z_h=z$. We shall naturally refer to $h$  and $z_0$ as the \emph{height} and \emph{root} of $z$, respectively. We also define the \emph{weight} of $z$ as 
$$\w(z):=\prod_{i=1}^h\frac 1{\deg(z_i)}.$$
Note that this quantity is the quenched probability that the sequence $(z_0,\ldots,z_h)$ is realized by a \textsc{nbrw} on $G$ starting from $z_0$.  In particular,
\begin{eqnarray}
\label{wbound}
\w(z) & \leq & P^h(z_0,z).
\end{eqnarray}
\iffalse
\begin{eqnarray}
\label{wbound}
P^{t}(x,\pi(y)) & \geq & \sum_{(u,v)\in\cH_{x}\times\cH_{y}}\w(u)\w(v){\bf 1}_{(\pi(u)=v)},
\end{eqnarray}
where $\cH_x$ (resp. $\cH_y$) denotes the set of unpaired half-edges with height $\frac{t}{2}$ and root $x$ (resp. $y$) in $\fF$ at the end of the exploration stage. 
\fi
Our rule for step 1 consists in selecting an half-edge with maximal weight\footnote{For definiteness, let us say that we use the lexicographic order on $\cX$ to break ties.} among all unpaired $z\in\fF$ with height $\h(z)<t/2$ and weight $\w(z)>\wm$, where 
$$\wm:=N^{-\frac 23}.$$ 
The only role of this parameter is to limitate the number of pairs formed during the exploration stage. As outlined in Section \ref{sec:outline}, we shall be interested in 
\begin{eqnarray*}
\fW & := & \sum_{(u,v)\in\cH_x\times\cH_y}\w(u)\w(v){\bf 1}_{\w(u)\w(v)\leq \theta},
\end{eqnarray*}
where $\cH_x$ (resp. $\cH_y$) denotes the set of unpaired half-edges with height $\frac{t}{2}$ and root $x$ (resp. $y$) in $\fF$ at the end of the exploration stage, and where
\begin{eqnarray}
\label{def:theta}
\theta & := & \frac{1}{N(\log N)^2}.
\end{eqnarray}
Write $\Zh$ for the quantity obtained by replacing $\leq$ with $>$ in $\fW$, so that
\begin{eqnarray*}
\fW+\Zh & = & \sum_{(u,v)\in\cH_x\times\cH_y}\w(u)\w(v)\\
& \geq & \sum_{z\in\cH_x\cup\cH_y}\w(z)-1,
\end{eqnarray*}
thanks to the inequality $ab\geq a+b-1$ for $a,b\in[0,1]$. 
Now, let $\fU$ denote the set of unpaired half-edges in $\fF$. By construction,  at the end of the exploration stage, each $z\in\fU$ must have height $t/2$ or weight less than $\wm$, so that 
\begin{eqnarray*}
\fW+\Zh & \geq & \sum_{z\in\fU}\w(z)-\sum_{z\in\fF}\w(z){\bf 1}_{(\w(z)<\wm)}-1.
\end{eqnarray*}
Therefore, Proposition \ref{pr:up} follows from the following four technical lemmas. 
\begin{lemma}
\label{lm:completion}
For every  $\varepsilon>0$,  
\begin{eqnarray*}
\PP\left(P^{t}(x,\pi(y))\leq \frac{\fW-\varepsilon}{N}\right) & = & o\left(\frac 1{N^2}\right)\, .
% \exp\left\{-\frac{\varepsilon^2}{4N\theta}\right\}=
\end{eqnarray*}
\end{lemma}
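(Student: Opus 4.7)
The plan is to condition on the exploration forest $\fF$ and reduce the statement to a concentration inequality for
$$S := \sum_{(u,v)\in\cH_x\times\cH_y} \w(u)\w(v)\,\ind_{\w(u)\w(v)\leq\theta}\,\ind_{\pi(u)=v},$$
which lower-bounds $P^t(x,\pi(y))$ via (\ref{eq:reversible}) together with the acyclicity of $\fF$ (so that $\w(u)=P^{t/2}(x,u)$ along the unique forest path from $x$ to any $u\in\cH_x$, and similarly from $y$). Given $\fF$, the completion stage samples a uniform pairing of the $M:=N-2K$ half-edges left unpaired, where $K$ is the number of pairs formed during exploration. Each unordered pair of remaining half-edges is thus matched with probability $1/(M-1)$, so $\EE[S\mid\fF]=\fW/(M-1)$. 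Since every half-edge selected at step~1 carries weight at least $\wm=N^{-2/3}$ while the total weight at each of the at most $t/2$ levels of $\fF$ is bounded by~$1$, one obtains $K\leq t/(2\wm)=O(N^{2/3}\log N)$; in particular $M-1\leq N$, so $\EE[S\mid\fF]\geq\fW/N$, and the lemma reduces to
$$\PP\Bigl(S\leq\EE[S\mid\fF]-\tfrac{\varepsilon}{2N}\,\Bigm|\,\fF\Bigr)=o(N^{-2}),\qquad\text{uniformly in }\fF.$$

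For the concentration, I would use Stein's method of exchangeable pairs, as foreshadowed in Section~\ref{sec:outline}. Write $S=\sum_{p\in\pi}a_p$ with $a_{\{u,v\}}:=\w(u)\w(v)\ind_{\w(u)\w(v)\leq\theta}$ on pairs crossing $\cH_x$ and $\cH_y$, and zero otherwise. The natural exchangeable partner $\pi'$ is built from $\pi$ by selecting two pairs $\{a,b\},\{c,d\}\in\pi$ uniformly at random and re-matching the four half-edges uniformly into one of the two alternative pairings of $\{a,b,c,d\}$. The resulting $(\pi,\pi')$ is exchangeable, $|S-S'|\leq 4\theta$, and the standard Stein identity for random matchings holds with $\lambda\asymp 1/M$. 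Coupled with a Stein-variance bound $v(\pi):=\tfrac{1}{2\lambda}\EE[(S-S')^2\mid\pi]=O(\theta\fW)$, Chatterjee's Bernstein-type inequality then yields
$$\PP\Bigl(|S-\EE[S\mid\fF]|\geq\delta\,\Bigm|\,\fF\Bigr)\,\leq\,2\exp\!\Bigl(-\frac{c\,\delta^2}{\theta\fW/N+\theta\delta}\Bigr),$$
so that taking $\delta=\varepsilon/(2N)$ produces an exponent of order $\varepsilon^2/(N^2\theta)=\varepsilon^2(\log N)^2$, comfortably smaller than $-2\log N$.

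The main obstacle is the Stein-variance estimate $v(\pi)=O(\theta\fW)$. The crude bound $\EE[(S-S')^2\mid\pi]\leq (4\theta)^2$ extracted from the jump size alone is far too weak, and one must instead decompose $(S-S')^2$ according to how many of the two swapped pairs $\{a,b\},\{c,d\}$ and the two re-matched pairs actually cross $\cH_x\times\cH_y$. Each surviving contribution then contains either an explicit factor $a_p^2\leq\theta a_p$ or a cross-product $a_p a_q$; summing over the $\Theta(M^2)$ possible swap choices and using $\sum_p a_p\leq\fW\leq 1$ should deliver the claimed bound. The calibration $\theta=1/(N(\log N)^2)$ is then precisely what is needed so that the resulting Gaussian-type tail $\exp(-c\varepsilon^2(\log N)^2)$ dominates $N^{-2}$; a larger $\theta$ would break this concentration, while a smaller $\theta$ would make the truncation bite too often and prevent $\fW$ from being close to its unconstrained counterpart in later sections.
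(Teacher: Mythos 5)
Your proposal is correct and follows essentially the same route as the paper: lower-bound $P^t(x,\pi(y))$ by the truncated weighted sum over matched pairs in $\cH_x\times\cH_y$, condition on the exploration stage so the completion is a uniform pairing with conditional mean $\fW/(|\cI|-1)\geq\fW/N$, and apply Chatterjee's exchangeable-pair concentration for a random switch, with the key Stein-variance bound obtained exactly as you sketch (one factor of the increment bounded by $O(\theta)$, the other by the unsigned sum, giving $c\asymp\theta\fW/N$). The only blemish is the prose slip ``exponent of order $\varepsilon^2/(N^2\theta)$'': your own displayed bound with $\delta=\varepsilon/(2N)$ gives exponent of order $\varepsilon^2/(N\theta)=\varepsilon^2(\log N)^2$, which is what you need and what the paper obtains.
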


 \begin{lemma}\label{lm:small}
For every $\varepsilon>0$, 
\begin{eqnarray*}
\PP\left(\sum_{z\in\fF}\w(z){\bf 1}_{(\w(z)<\wm)}>\varepsilon\right)  & = & o\left(\frac 1{N^2}\right)\, .
\end{eqnarray*}
\end{lemma}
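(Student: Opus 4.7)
The plan is to reduce the sum to parent weights via weight conservation, bound its annealed expectation by a many-to-one formula for the idealised exposure, and conclude with Markov's inequality.

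I first make a deterministic observation. When $z_p$ is matched with $z'$ in step 2, all $\deg(z')$ neighbours of $z'$ become children of $z_p$ with the common weight $\w(z_p)/\deg(z')$, so they are either all below $\wm$ or all above it. Writing $E$ for the set of expansions producing only small children, the condition $\w(z_p)/\deg(z')<\wm$ together with $\deg(z')\leq\Delta$ forces $\w(z_p)\in(\wm,\Delta\wm)$, and the identity $\sum_{\text{children of }z_p}\w=\w(z_p)$ yields
\[
\sum_{z\in\fF,\,\w(z)<\wm}\w(z)\;=\;\sum_{z_p\in E}\w(z_p)\;\leq\;2,
\]
where the last inequality is weight conservation applied to the two roots $x,y$ of weight $1$ each.

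Next, I would dominate the exposure by the idealised branching random walk (\textsc{brw}) obtained by disabling the step-$3$ collision test and replacing each $z'$ by an independent uniform sample $z^\star\in\cX$: every particle then has $D$ children of weight $w_p/D$, where $D$ is an independent copy of $\deg(V)-1$ for a size-biased vertex $V$. Since the check in step 3 can only prevent particles from being added, the real exposure forest is pointwise dominated by the idealised one, and a short induction on depth gives the many-to-one identity
\[
\EE_{\mathrm{BRW}}\!\left[\sum_{|z|=k}\w(z)\mathbf{1}_{\w(z)<\wm}\right]\;=\;\PP\!\left[S_k>\tfrac{2}{3}\log N\right],
\]
where $S_k=\sum_{j=1}^k\log D_j$ is an IID random walk of step mean $\mu$ and variance $\sigma^2$. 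Summing over $k\leq t/2$ and using that $t/2\leq(1+o(1))(\log N)/(2\mu)\ll(2/3)(\log N)/\mu$, each probability is a large-deviation upper tail with deficit $\gtrsim(\log N)/6$.

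Finally, a Chernoff/Bernstein estimate exploiting the bounded increments $\log D_j\leq\log\Delta=o(\log N)$ and the CLT-variance $\sigma^2$ should yield $\EE[\sum]=o(N^{-2})$, after which Markov's inequality completes the proof. The hard part is this last step: assumption (\ref{assume:sigma}) does not directly bound $\mu/\sigma^2$ from below, so in the regime where $\sigma^2$ is comparable to $\mu^3$ the Gaussian/Chernoff decay rate is only polynomial in $N^{-1}$. To push the decay exponent past $2$, I would combine the Chernoff estimate with the deterministic bound $\sum\leq 2$ via a $p$-th moment amplification $\PP[\sum>\varepsilon]\leq 2^{p-1}\EE[\sum]/\varepsilon^p$, which lets one trade moment order for decay exponent until the required $o(N^{-2})$ is reached.
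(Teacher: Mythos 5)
Your diagnosis of the difficulty is exactly right, but the repair you propose does not work, and this is precisely where the content of the lemma lies. Write $\Sigma:=\sum_{z\in\fF}\w(z){\bf 1}_{(\w(z)<\wm)}$. The first-moment bound via your many-to-one formula gives, at best, $\EE[\Sigma]\lesssim\sum_{k\leq t/2}\PP(S_k>\tfrac23\log N)\approx N^{-c\mu/\sigma^2}$ for some absolute constant $c$ by Chernoff/Bernstein, and nothing in (\ref{assume:sigma}) forces $c\mu/\sigma^2>2$. Your proposed amplification $\PP(\Sigma>\varepsilon)\leq 2^{p-1}\EE[\Sigma]/\varepsilon^p=(2/\varepsilon)^{p-1}\,\EE[\Sigma]/\varepsilon$ rests on the deterministic inequality $\Sigma^p\leq 2^{p-1}\Sigma$, so the only $N$-dependence remaining in the bound is the unchanged factor $\EE[\Sigma]$; since $2/\varepsilon>1$, increasing $p$ makes the bound strictly \emph{worse}, not better. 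A deterministic truncation can never convert a fixed polynomial decay of the first moment into a faster one. What is needed is a genuine high-moment estimate $\EE[\Sigma^m]\leq(o(1))^m$ with $m\asymp\log N$, and that requires exploiting the approximate independence of $m$ distinct realizations of the event rather than the pointwise bound $\Sigma\leq 2$.

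This is exactly what the paper does. It interprets $Z$ (the half of $\Sigma$ rooted at $z_0\in\{x,y\}$) as the quenched probability that a single \textsc{nbrw} from $z_0$, killed when its weight drops below $\wm$, traces a path of $\fF$ ending at a small-weight leaf; hence $Z^m\leq\PP(A\mid\pi)$, where $A$ is the event that $m=\lfloor\log N\rfloor$ independent such walks all do so (their traces forming a tree of height $<t/2$), and Markov gives $\PP(Z>\varepsilon)\leq\PP(A)/\varepsilon^m$. Generating the $m$ walks sequentially together with the pairing, each new walk, conditionally on the previous ones, either follows one of the $<m$ already-traced trajectories down to weight $\eta=(\log N)^{-2}$ (probability at most $m\eta=o(1)$), or leaves the traced tree and thereafter is coupled with \textsc{iid} uniform samples, for which the event that the weight drops by a further factor $\Delta\wm/\eta$ within $t/2$ steps has probability $o(1)$ by Chebyshev --- note that only an $o(1)$ bound \emph{per walk} is needed here, not $o(N^{-2})$. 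This yields $\PP(A)\leq(o(1))^m$ and hence $\PP(Z>\varepsilon)\leq(o(1))^{\log N}=o(N^{-2})$. Your many-to-one computation is essentially the $m=1$ instance of this scheme; to complete your proof you would need the $m$-fold tensorization (or some other mechanism producing superpolynomial decay), and the bound $\Sigma\leq 2$ cannot substitute for it.
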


\begin{lemma}\label{lm:large}
For every $\varepsilon>0$, 
\begin{eqnarray*}
\PP\left(\Zh>\Phi(\lambda)+\varepsilon\right)  & = & o\left(\frac 1{N^2}\right)\, .
\end{eqnarray*}
\end{lemma}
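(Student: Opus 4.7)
The quantity $\Zh=\sum_{u,v}\w(u)\w(v)\mathbf{1}_{\w(u)\w(v)>\theta}$ has a clean probabilistic interpretation: conditional on the forest $\fF$, it is exactly the quenched probability that two independent \textsc{nbrw} trajectories starting from $x$ and $y$ end at depth $t/2$ inside $\cH_x\times\cH_y$ and satisfy
$$-\log\w(X_{t/2})-\log\w(Y_{t/2})<-\log\theta=\log N+2\log\log N.$$
Since $-\log\w(z)=\sum_{i=1}^{h(z)}\log\deg(z_i)$ is a sum of $\log$-degrees along the unique tree-path from root to $z$, proving the lemma amounts to controlling the distribution of a sum of $t$ such log-degrees. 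A direct computation with the parameters $t_\star$, $\omega_\star$ shows that $-\log\theta=\mu t+\lambda\sigma\sqrt t+o(\sigma\sqrt t)$, which is precisely the scale needed for a Gaussian tail of value $\Phi(\lambda)$.

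First I would couple the exposure process with an \textsc{iid} sampling scheme exactly as in Section \ref{sec:lower-bound}. Each time a tree-path is extended, a uniform unpaired half-edge is drawn; replacing this by a truly uniform half-edge in $\cX$ produces a coupling of the trajectory-degrees $\deg(z_i)$ with truly \textsc{iid} size-biased samples $\deg(X^\star_k)$. Because the $\wm$-threshold in the selection rule bounds the total number of pairings performed during the whole exploration by $2/\wm=2N^{2/3}\ll N$, this coupling is faithful up to a $o(1)$ probability of collision, with each log-degree distributed exactly as in \eqref{eq:mu}--\eqref{eq:sigma}. Berry-Esseen applied to the resulting sum of $t$ independent samples with mean $\mu$, variance $\sigma^2$, and third moment $\varrho$ then yields
$$\EE[\Zh]\leq \Phi(\lambda)+O\!\left(\frac{\varrho}{\sigma^3\sqrt t}\right)+o(1)=\Phi(\lambda)+o(1),$$
where the Berry-Esseen error is controlled by assumption \eqref{assume:sigma}.

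To upgrade this \emph{in-expectation} estimate into the tail bound $o(1/N^2)$ required for the eventual union bound over pairs $(x,y)$, I would resort to the moment method: estimate $\EE[\Zh^k]$ for $k=\Theta(\log N)$. The $k$-th moment is a sum over $k$-tuples of pairs of endpoints $(u_1,v_1),\dots,(u_k,v_k)$ in $\cH_x\times\cH_y$. Sibling subtrees of $\fF$ produced by the exposure process are conditionally independent given the skeleton of ancestors, and the exposure process itself resembles a Galton-Watson branching process with size-biased offspring, so $k$ independent pairs of tree-paths behave asymptotically as \textsc{iid} copies. This gives $\EE[\Zh^k]\leq(\Phi(\lambda)+o(1))^k$, whence Markov's inequality yields
$$\PP\bigl[\Zh>\Phi(\lambda)+\varepsilon\bigr]\leq\left(\frac{\Phi(\lambda)+o(1)}{\Phi(\lambda)+\varepsilon}\right)^{\!k},$$
which is $o(N^{-2})$ for $k$ a sufficiently large multiple of $\log N$.

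The main obstacle is therefore the $k$-th moment estimate. The difficulty is twofold: one must quantify the loss of independence when several of the $k$ pairs share an initial tree-prefix (forcing a careful bookkeeping of branching-process paths weighted by $\w$), and one must control the small deviation of the exposure process from the \textsc{iid} ideal (depletion of unpaired half-edges, and the occasional collision that truncates a subtree). Both effects should contribute only multiplicative $(1+o(1))^k$ corrections once we exploit the boundedness of each log-summand by $\log\Delta=o(\sqrt{\log N})$, via a Bernstein-type exponential moment bound on $-\log\w(u)$. Executing this bookkeeping cleanly is the main technical content of the proof.
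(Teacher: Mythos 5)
Your strategy is the same as the paper's: interpret $\Zh$ (conditionally on the graph) as the quenched probability that an independent pair of \textsc{nbrw}s of length $t/2$ from $x$ and $y$ traces forest paths with weight product exceeding $\theta$, bound the $k$-th moment for $k=\Theta(\log N)$ by realizing $\Zh^{k}\leq\PP(B\mid\pi)$ for $k$ independent such pairs, generate the walks together with the pairing sequentially (annealed), couple the fresh half-edges with \textsc{iid} uniform samples, and finish with Berry--Esseen and Markov. The paper does exactly this with $m=\lceil\log N\rceil$. However, you explicitly defer the decisive step --- the moment bound $\EE[\Zh^{k}]\leq(\Phi(\lambda)+o(1))^{k}$ in the presence of shared tree-prefixes --- and the mechanism you sketch for it would not close the gap. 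The problem is not that shared prefixes contribute small multiplicative corrections controllable by an exponential-moment (Bernstein) bound on $-\log\w(u)$: if the $\ell$-th pair of walks retraces one of the previously traced trajectories in its entirety, its weight indicator is \emph{deterministically} equal to that of the earlier pair, so conditional independence fails completely and no moment bound on a single path weight repairs this. An "asymptotic \textsc{iid}-ness of $k$ tree-paths" claim is precisely what has to be proved, not assumed.

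The paper closes this with a short dichotomy that is absent from your proposal. Set $s=\lceil 2\log\log N\rceil$. Given the first $\ell-1$ pairs, either one of the two new walks follows one of the at most $\ell-1<m$ previously traced trajectories for $s$ consecutive steps --- each such step has conditional probability at most $1/2$ because every half-edge has at least $2$ neighbours by assumption (\ref{assume:delta}), so this costs at most $2m2^{-s}=o(1)$ --- or both walks reach unpaired territory within $s$ steps, so the pair encounters at least $t-2s$ fresh half-edges. Since every degree factor is at least $2$, the product over the fresh half-edges alone is a lower bound discard, i.e.\ the event $\prod\deg<1/\theta$ forces $\prod_{\mathrm{fresh}}\deg<1/\theta$; these $t-2s$ degrees couple with \textsc{iid} samples at total-variation cost $O(mt^2/N)$, and Berry--Esseen applied to $t-2s$ terms gives $\Phi(\lambda)+o(1)$ because $\mu s=o(\sigma\sqrt{t})$ under assumption (\ref{assume:sigma}). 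This yields the uniform conditional bound $\Phi(\lambda)+o(1)$ for each successive pair, hence $\PP(B)\leq(\Phi(\lambda)+o(1))^{m}$, and Markov finishes. Your write-up correctly identifies the target and the obstacles, but the argument that actually overcomes the shared-prefix obstacle is missing.
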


\begin{lemma}\label{lm:collisions}
For every $\varepsilon>0$,
\begin{eqnarray*}
\PP\left(x\in\cR,y\in\cR\setminus\cB_x,\sum_{z\in\fU}\w(z)<2-\varepsilon\right) & = & o\left(\frac 1{N^2}\right)\, .
\end{eqnarray*}
\end{lemma}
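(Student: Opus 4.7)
The strategy is to study $L := 2-\sum_{z\in\fU}\w(z) = \sum_k \w^{(k)}\ind_{B_k}$, where $B_k$ denotes the event that step (3) fails at the $k$-th iteration of the exposure process (a ``collision'') and $\w^{(k)}:=\w(z^{(k)})$ is the weight of the half-edge selected at that step. The goal is to show $\PP(L>\varepsilon) = o(1/N^2)$ on the event $\{x\in\cR,\ y\in\cR\setminus\cB_x\}$, first by bounding $\EE[L]$ and then by concentrating.

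The sequential construction gives a clean handle on the conditional collision probability: given $\cF_{k-1}$, the partner $z'^{(k)}$ is uniform over the remaining unpaired half-edges, and $B_k$ occurs iff $z'^{(k)}$ sits at one of the vertices in $V_{k-1}$, the set of vertices touched by $\fF$. Crucially, every successful pairing brings in exactly one new vertex (otherwise step (3) would fail), so $|V_{k-1}|\leq k+2$ and
$$\PP(B_k\mid\cF_{k-1})\leq \frac{\sum_{v\in V_{k-1}}\deg(v)}{N-2k+1}\leq \frac{2(k+2)\Delta}{N}.$$
Since $\sum_k\w^{(k)}\leq t$ (each height contributes total weight $\leq 2$) and $\w^{(k)}\geq \wm=N^{-2/3}$, the total number of steps is $K\leq t/\wm$, so $p^* := \sup_k\PP(B_k\mid\cF_{k-1}) = O(t\Delta N^{-1/3})=O(N^{-1/3+o(1)})$. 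Moreover, on the event in question, the balls of radius $\lfloor h/2\rfloor$ around $x$ and $y$ are disjoint and acyclic, so $B_k=0$ whenever $\h(z^{(k)})<\lfloor h/2\rfloor$; every contributing term therefore obeys $\w^{(k)}\leq 2^{-\lfloor h/2\rfloor}$ (since $\deg(z_i)\geq 2$), and $\EE[L]\leq p^*t=o(1)$.

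To upgrade this to the claimed tail estimate I would apply a Freedman-type inequality to the martingale $\sum_j(L_j-\EE[L_j\mid\cF_{j-1}])$: its jumps are bounded by $2^{-\lfloor h/2\rfloor}$ and its predictable quadratic variation by $p^*t$. This naively gives $\PP(L>\varepsilon)\leq \exp(-\Omega(\varepsilon\cdot 2^{h/2}))$, which is only sub-polynomial because $h\leq\log\log N$; closing the gap to $o(1/N^2)$ is the main difficulty. The natural fix is to decompose the selection steps into geometrically-spaced weight slices: on the ``heavy'' slice $\w^{(k)}\in[\alpha,1]$ a direct union bound on $B_k$ benefits from the fact that $|V_{k-1}|$ is only polylogarithmic when $k$ is small, while on the ``light'' slice $\w^{(k)}<\alpha$ an exponential-moment/Freedman bound delivers super-polynomial decay once $\alpha$ is taken polynomially smaller than $1/\log N$. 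Calibrating the threshold $\alpha$ between the two regimes, and exploiting the sparsity $\Delta=N^{o(1)}$ to control the number of heavy slices, should yield the required $o(1/N^2)$ bound.
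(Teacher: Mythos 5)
Your overall architecture is the same as the paper's: write the deficit $2-\sum_{z\in\fU}\w(z)$ as a sum of losses at collision steps, bound the conditional collision probability by $O(k\Delta/N)$, use $\sum_k\w^{(k)}\le t$ and $\tau\le t/\wm$ to get a first moment (the paper's $m$) of order $t^2\Delta N^{-1/3}=o(1)$, and then run a martingale concentration argument with predictable quadratic variation $v=N^{-1/3+o(1)}$. However, the proposal does not close, and the ``main difficulty'' you flag is an artifact of applying the concentration inequality in the wrong regime. You use the Gaussian/Azuma-type tail $\exp(-\Omega(a/R))$ with jump bound $R=2^{-\lfloor h/2\rfloor}$, which is indeed only sub-polynomial since $h\le\log\log N$. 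But Freedman's inequality also has a Bennett/Poisson-type form: for a martingale with increments bounded by $R$ and predictable quadratic variation bounded by $v$, one has $\PP(M_\tau\ge a)\le\bigl(ev/(v+aR)\bigr)^{a/R}$. Because $v$ is \emph{polynomially} small, it suffices to take $a$ and $R$ of constant order with $a/R=7$: each of the seven factors is $N^{-1/3+o(1)}$, giving $N^{-7/3+o(1)}=o(N^{-2})$. This is exactly what the paper does, truncating the increments at $\varepsilon$ so that $R=\varepsilon$ holds unconditionally, and checking separately that on $\{x\in\cR,\,y\in\cR\setminus\cB_x\}$ the truncation is never active. Your proposed workaround (geometric weight slices, a calibrated threshold $\alpha$, separate union bounds on the heavy slice) is both unnecessary and not actually carried out -- ``should yield'' is doing all the work -- so as written the proof has a genuine gap precisely at the step the lemma requires.

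A second, smaller problem: the claim that ``$B_k=0$ whenever $\h(z^{(k)})<\lfloor h/2\rfloor$'' is not justified. A collision at step $k$ produces a path between roots of length $\h(z_k)+\h(z_k')+O(1)$, and the events $x\in\cR$, $y\notin\cB_x$ only force this \emph{sum} to exceed $h$; they do not prevent a shallow selected half-edge from colliding with a deep part of the forest, so they do not directly bound $\w(z_k)\le 2^{-\lfloor h/2\rfloor}$. To control the individual jump one must also invoke the greedy selection rule: the selected weights $\w(z_k)$ are non-increasing in $k$, so the collision partner (a child of an earlier-selected half-edge) has weight at least $\w(z_k)/\Delta$, and combining this with $\w(z)\le 2^{-\h(z)}$ and the length constraint is what makes the jump $o(1)$. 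As noted above, though, once you use the Bennett form of Freedman you only need the jumps to be at most a fixed $\varepsilon$, not $2^{-h/2}$, which considerably lightens this part of the argument.
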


\section{Proof of Lemma \ref{lm:completion}}
\label{sec:completion}

Combining the representation  (\ref{eq:reversible}) with the observation (\ref{wbound})  yields
\begin{eqnarray*}
P^{t}(x,\pi(y)) & \geq & \sum_{(u,v)\in\cH_x\times\cH_y}\w(u)\w(v){\bf 1}_{\w(u)\w(v)\leq \theta}{\bf 1}_{\pi(u)=v}.
\end{eqnarray*}
The right-hand side can be interpreted as the weight of the uniform pairing chosen at the completion stage, provided we define the weight of a pair $(u,v)$ as
\begin{eqnarray}
\label{eq:weight-pairing}
\w(u)\w(v){\bf 1}_{u\in\cH_x}{\bf 1}_{v\in\cH_y}{\bf 1}_{\w(u)\w(v)\leq \theta}.
\end{eqnarray}
Lemma \ref{lm:completion} now follows from the following general concentration inequality, which we apply conditionally on the exploration stage, with $\cI$ being the set of  half-edges that did not get paired, and weights being given by (\ref{eq:weight-pairing}).
\begin{lemma}\label{prop:conc_ineq_matching}
Let $\cI$ be an even set, $\{w_{i,j}\}_{(i,j)\in \cI\times \cI}$ an array of non-negative weights, and $\pi$ a uniform random pairing on $\cI$. Then for all $a> 0$,
\begin{eqnarray*}
\PP\left(\sum_{i\in\cI} w_{i,\pi(i)}\leq m-a\right) & \leq & \exp\left\{-\frac{a^2}{4 \theta m}\right\},
%\PP\left(W\geq m+t\right) & \leq & \exp\left\{-\frac{t^2}{4 \theta m+2\theta t}\right\}.
\end{eqnarray*}
where $m=\frac{1}{|\cI|-1}\sum_{i\in\cI}\sum_{j\neq i}w_{i,j}$ and $\theta=\max_{i\neq j}(w_{i,j}+w_{j,i})$. 
\end{lemma}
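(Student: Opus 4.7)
The plan is to use the size-biased coupling method, which naturally yields Bennett-type concentration of precisely the form sought.

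I first symmetrize by writing
\[
S := \sum_{i\in\cI} w_{i,\pi(i)} \;=\; \sum_{\{i,j\}\in\pi}\tilde{w}_{i,j}, \qquad \tilde{w}_{i,j} := w_{i,j} + w_{j,i} \in [0,\theta],
\]
and a direct computation gives $\EE[S]=m$.

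Next, I construct a size-biased companion $S^s$ of $S$ via a ``pair-switch'' coupling. Given the uniform pairing $\pi$, independently sample an unordered pair $\{I,J\}\subset\cI$ with probability $\tilde{w}_{I,J}/\bigl(m(|\cI|-1)\bigr)$. If $\pi(I)=J$, set $\pi^s := \pi$; otherwise, with $I' := \pi(I)$ and $J' := \pi(J)$, obtain $\pi^s$ by replacing the two pairs $\{I,I'\}, \{J,J'\}$ of $\pi$ with $\{I,J\}, \{I',J'\}$. A short combinatorial count---each matching containing a fixed pair $\{I,J\}$ has exactly $|\cI|-1$ pre-images under the switch (namely itself, plus two per ``other'' pair of the target matching)---confirms that $S^s := \sum_{\{i,j\}\in\pi^s}\tilde{w}_{i,j}$ has the size-biased distribution of $S$. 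Since the switch modifies only two pairs,
\[
|S - S^s| \;=\; \bigl|\tilde{w}_{I,I'}+\tilde{w}_{J,J'}-\tilde{w}_{I,J}-\tilde{w}_{I',J'}\bigr| \;\le\; 2\theta.
\]

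With the coupling in hand, the conclusion follows from a standard Chernoff argument. Setting $\psi(\lambda) := \log\EE[e^{-\lambda S}]$, the size-biased identity $\EE[S\,f(S)] = m\,\EE[f(S^s)]$ applied with $f(s)=e^{-\lambda s}$ gives $-\psi'(\lambda) = m\,\EE[e^{-\lambda S^s}]/\EE[e^{-\lambda S}]$; the coupling bound $S^s \le S + 2\theta$ then implies $\psi'(\lambda)\le -m e^{-2\lambda\theta}$. Integrating from $0$ and using $1-e^{-x}\ge x - x^2/2$, I obtain $\psi(\lambda) \le -m\lambda + m\theta\lambda^2$, and Markov's inequality optimized at $\lambda = a/(2m\theta)$ delivers the stated bound.

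The principal subtle step is verifying the size-biased property of the switch coupling, which rests on the combinatorial identity above; the remainder is routine Chernoff-type optimization. A secondary check is that the optimizing $\lambda$ satisfies $2\lambda\theta = a/m \le 1$ (otherwise $a > m$ forces $S \le m - a < 0$, an impossible event for the non-negative $S$), ensuring that the Taylor bound used in the integration is applied in its valid range.
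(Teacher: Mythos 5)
Your proof is correct, but it takes a genuinely different route from the paper. The paper invokes Chatterjee's concentration inequality for exchangeable (Stein) pairs as a black box: it forms $Y=\sum_i w_{i,\pi(i)}-m$ and its companion $Y'$ via a \emph{uniformly} chosen random switch of two pairs, verifies the three hypotheses $(Y,Y')\stackrel{d}{=}(Y',Y)$, $\EE[Y'-Y|Y]=-\lambda Y$ and $\EE[(Y'-Y)^2|Y]\leq\lambda(bY+c)$ with $\lambda=4/|\cI|$, $b=2\theta$, $c=4m\theta$, and reads off the tail bound. You instead build a \emph{size-biased} coupling: the same switch operation, but with the switched pair $\{I,J\}$ chosen with probability proportional to $\tilde w_{I,J}$, and you check directly (via the $|\cI|-1$ preimage count, which is correct) that the result is $\pi$ conditioned on $\{I,J\}\in\pi$, so that $S^s$ is size-biased with $S^s\le S+2\theta$; the lower-tail bound then follows from a self-contained Chernoff computation, landing on exactly the same constant $4\theta m$. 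The trade-off: the paper's argument is shorter on the page but leans on a cited theorem, while yours is fully self-contained and makes transparent why only the one-sided bound $S^s\le S+2\theta$ is needed for the lower tail. Two minor remarks: the inequality $1-e^{-x}\ge x-x^2/2$ holds for all $x\ge0$, so your ``secondary check'' that $2\lambda\theta\le1$ is not actually needed for validity (though it is harmless); and the degenerate cases $a>m$ or $m=0$ are trivial since $S\ge0$, as you note.
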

Note that in our case, $m  =  \frac{\fW}{|\cI|-1}$. Lemma \ref{lm:completion} follows easily by taking $a=\frac{\varepsilon}{|\cI|-1}$ and observing that $|\cI|-1\leq N$ and $\fW\leq 1$. 

\begin{proof}
We exploit the following concentration result for Stein pairs due to \citet{chatterjee2007stein} (see also \citet[Theorem 7.4]{ross2011fundamentals}): let $Y,Y'$ be bounded variables satisfying
\begin{enumerate}
\item $(Y,Y')\stackrel{d}{=}(Y',Y)$;
\item $\EE[Y'-Y|Y]=-\lambda Y$;
\item $\EE[(Y'-Y)^2|Y]\leq \lambda(bY+c)$,
\end{enumerate}
for some constants $\lambda\in(0,1)$ and $b,c\geq 0$. Then for all $a>0$,
\begin{eqnarray*}
\PP\left(Y\leq -a\right) \leq \exp\left\{-\frac{a^2}{c}\right\}& \textrm{ and } &
\PP\left(Y\geq a\right) \leq  \exp\left\{-\frac{a^2}{ab+c}\right\}.
\end{eqnarray*}
We shall only use the first inequality. Consider the centered variable 
$$Y:=\sum_{i\in\cI} w_{i,\pi(i)}-m,$$ and let $Y'$ be the corresponding quantity for the pairing $\pi'$ obtained from $\pi$ by performing a random switch: two indices $i,j$ are sampled uniformly at random from $\cI$ without replacement, and the pairs $\{i,\pi(i)\}$, $\{j,\pi(j)\}$ are replaced with the pairs $\{i,j\}$, $\{\pi(i),\pi(j)\}$.
%\begin{displaymath}
%    \xymatrix{ i & \bullet \ar@{<->}[d]|{\pi\prime}\ar@{<.>}[r]|\pi  & \bullet \ar@{<->}[d]|%{\pi\prime}  & \pi(i)\\
%               j & \bullet \ar@{<.>}[r]|\pi  & \bullet & \pi(j)}
%\end{displaymath}
This changes the weight by exactly
\begin{equation}
\label{eq:Deltaij}
\Delta_{i,j}:= w_{i,j}+w_{j,i}+w_{\pi(i),\pi(j)}+w_{\pi(j),\pi(i)}-w_{i,\pi(i)}-w_{\pi(i),i}-w_{j,\pi(j)}-w_{\pi(j),j}.
\end{equation}
It is not hard to see that $(\pi,\pi')\stackrel{d}{=}(\pi',\pi)$,  so that (i) holds. 
Moreover, 
\begin{eqnarray*}
\EE[Y'-Y|\pi]& = & \frac{1}{|\cI|(|\cI|-1)}\sum_{i\in\cI}\sum_{j\neq i} \Delta_{i,j}\\
& = & \frac{4}{|\cI|(|\cI|-1)}\sum_{i\in\cI}\sum_{j\neq i}w_{i,j}-\frac{4}{|\cI|}\sum_{i\in\cI}w_{i,\pi(i)}\\
& = & -\frac{4}{|\cI|}Y.
\end{eqnarray*}
Regarding the square $\Delta_{i,j}^2=|\Delta_{i,j}||\Delta_{i,j}|$, we may bound the first copy of $|\Delta_{i,j}|$ by $2\theta$ and the second by changing all minus signs to plus signs in (\ref{eq:Deltaij}), yielding
\begin{eqnarray*}
\EE\left[(Y'-Y)^2|\pi\right]& = & \frac{1}{|\cI|(|\cI|-1)}\sum_{i\in\cI}\sum_{j\neq i} \Delta_{i,j}^2\\
&  \leq & \frac{8\theta}{|\cI|(|\cI|-1)}\sum_{i\in\cI}\sum_{j\neq i}w_{i,j}+\frac{8\theta}{|\cI|}\sum_{i\in\cI}w_{i,\pi(i)}\\
& = & \frac{8\theta}{|\cI|}\left(2m+Y\right).
\end{eqnarray*}
Note that taking conditional expectation with respect to $Y$ does not affect the right-hand side. Thus, (ii) and (iii) hold with $\lambda=\frac{4}{|\cI|}$, $b=2\theta$ and $c=4 m \theta$. 
\end{proof}

\section{Proof of Lemma \ref{lm:small}}
\label{sec:small}
We may fix $z_0\in\{x,y\}$ and restrict our attention to the halved sum
$$Z:=\sum_{z\in\fF}\w(z){\bf 1}_{(\w(z)<\wm)}{\bf 1}_{(z\textrm{ has root }z_0)}.$$
Consider $m=\lfloor\log N\rfloor$ independent \textsc{nbrw}s on $G(\pi)$ starting at $z_0$, each being killed as soon as its weight falls below $\wm$, and write $A$ for the event that their trajectories form a tree of height less than $t/2$.  Clearly,
$\PP\left(A|\pi\right)\geq Z^m.$
Taking expectation and using Markov inequality, we deduce that
\begin{eqnarray*}
\PP\left(Z>\varepsilon\right) & \leq & \frac{\PP\left(A\right)}{\varepsilon^m},
\end{eqnarray*}
where the average is now taken over both the  walks and the pairing. Recalling that $m=\lceil \log N\rceil$, it is more than enough to establish that 
%\begin{eqnarray*}
$\PP(A)  =  \left(o(1)\right)^m.$
%\end{eqnarray*} 
To do so, we generate the $m$ killed \textsc{nbrw}s one after this other, revealing the underlying pairs along the way, as described in Section \ref{sec:lower-bound}. Given that the first $\ell-1$ walks form a tree of height less than $t/2$, the conditional chance that the $\ell^\textrm{th}$ walk also fulfils the requirement is $o(1)$, uniformly in $1\leq\ell\leq m$. Indeed, 
\begin{itemize}
\item either its weight falls below $\eta=(1/{\log N})^{2}$ before it ever reaches an unpaired half-edge: thanks to the tree structure, there are at most $\ell-1< m$ possible trajectories to follow, so the chance is less than $$m\eta=o(1).$$
\item or the remainder of its trajectory after the first unpaired half-edge has weight less than ${\Delta\wm}/{\eta}$: this part consists of at most $t/2$ half-edges which can be coupled with uniform samples from $\cX$ for a total-variation cost of ${mt^2}/{N}$, as in Section \ref{sec:lower-bound}. Thus, the conditional chance is at most
\begin{eqnarray*}
\qquad\qquad\frac{mt^2}{N}+\PP\left(\prod_{k=1}^{t/2}\deg(X_k^\star) \geq  \frac{\eta}{\Delta\wm}\right)
& = & o(1),
\end{eqnarray*}
by Chebychev's inequality, since 
$\log\left(\frac{\eta}{\Delta\wm}\right) - \frac {\mu t}2 >>  \sigma\sqrt{\frac t2}$.
\end{itemize}

\section{Proof of Lemma \ref{lm:large}}
\label{sec:large}

Set $m=\lceil\log N\rceil$. On $G(\pi)$, let $X^{(1)},\ldots,X^{(m)}$ and $Y^{(1)},\ldots,Y^{(m)}$ be $2m$ independent \textsc{nbrw}s of length $t/2$ starting at $x$ and $y$ respectively. Let $B$ denote the event that their trajectories form two disjoint trees and that for all $1\leq k\leq m$, $$\prod_{\ell=1}^{t/2} \frac{1}{\deg(X^{(k)}_\ell)}\prod_{\ell=1}^{t/2}\frac{1}{\deg(Y^{(k)}_\ell)}>\theta.$$ Then clearly, 
$\PP\left(B|\pi\right)\geq \Zh^m$. Averaging w.r.t. the pairing $\pi$, we see that
\begin{eqnarray*}
\PP\left(\Zh>\Phi(\lambda)+\varepsilon\right) & \leq & \frac{\PP\left(B\right)}{(\Phi(\lambda)+\varepsilon)^m}.
\end{eqnarray*}
Thus, it is enough to establish that 
$\PP(B)  \leq \left(\Phi(\lambda)+o(1)\right)^m$. We do so by generating the $2m$ walks $X^{(1)},Y^{(1)},\ldots,X^{(m)},Y^{(m)}$ one after the other along with the underlying pairing, as above. Given that $X^{(1)},Y^{(1)},\ldots,X^{(\ell-1)},Y^{(\ell-1)}$ already satisfy the desired property, the conditional chance that $X^{(\ell)},Y^{(\ell)}$ also does is at most $\Phi(\lambda)+o(1)$, uniformly in $1\leq \ell \leq m$. Indeed,
\begin{itemize}
\item either one of the two walks attains length $s=\lceil 2\log\log N\rceil$ before reaching an unpaired half-edge: there are at most $\ell-1<m$ possible trajectories to follow for each walk, so the conditional chance is at most 
$$2m2^{-s}=o(1).$$
\item or at least $t-2s$ unpaired half-edges are encountered, and  the product of their degrees falls below  $\frac{1}{\theta}$ with conditional probability at most 
\begin{eqnarray*}
\qquad \frac{4mt^2}N+\PP\left(\prod_{k=1}^{t-2s}\deg(X_k^\star)< \frac 1{\theta}\right)& = & \Phi(\lambda)+o(1),
\end{eqnarray*}
by the same coupling as above and Berry-Essen's inequality (\ref{lb:3}).
\end{itemize}

\section{Proof of Lemma \ref{lm:collisions}}
\label{sec:collisions}

We denote by $\tau$ the (random) number of pairs formed during the exploration stage. 
For $k\geq 0$, we let $\fU_k$ denote the set of unpaired half-edges in the forest after $k\wedge\tau$ pairs have been formed, and we consider the random variable
$$W_k:=\sum_{z\in\fU_k}\w(z).$$
Initially $W_0=2$, and this quantity either stays constant of decreases at each stage, depending on whether the condition appearing in step 3 is satisfied or not. More precisely, denoting by $z_k$ (resp. $z_k'$) the half-edge selected at step 1 (resp. chosen at step 2) of the $k^\textrm{th}$ pair, we have
for all $k\geq 1$, 
\begin{eqnarray*}
W_k & = & W_{k-1}-{\bf 1}_{(k\leq \tau)}\left(\w(z_k){\bf 1}_{(z_k'\in \fU_{k-1}^+)}+\w(z_k'){\bf 1}_{(z_k'\in \fU_{k-1})}\right),
\end{eqnarray*}
where $\fU_{k-1}^+$ is the union of $\fU_{k-1}$ and the set of unpaired neighbours of the roots. Now, let $\{\cG_k\}_{k\geq 0}$ be the natural filtration associated with the exploration stage. Note that $\tau$ is a stopping time, that $\w(z_k)$ is $\cG_{k-1}-$measurable, and that  the conditional law of  $z_k'$ given $\cG_{k-1}$ is uniform on $\cX\setminus\{z_1,\ldots,z_k,z_1'\ldots,z_{k-1}'\}$. Thus,
\begin{eqnarray*}
\EE[W_{k}-W_{k-1}|\cG_{k-1}] & = & -{\bf 1}_{(k\leq\tau)}\frac{\w(z_k)(|\fU_{k-1}^+|-2)+W_{k-1}}{N-2k+1}.\\
\EE\left[(W_{k}-W_{k-1})^2|\cG_{k-1}\right] & = & {\bf 1}_{(k\leq\tau)}\frac{\w(z_k)^2(|\fU_{k-1}^+|-4)+2\w(z_k)W_{k-1}+\sum_{z\in\fU_{k-1}}\w(z)^2}{N-2k+1}.
\end{eqnarray*}
To bound those quantities, observe that each half-edge in $\fU_{k-1}$ has weight at least $\frac{\w(z_k)}{\Delta}$ because its parent has been selected at an earlier iteration and our selection rule ensures that the quantity $\w(z_k)$ is non-increasing with $k$. Consequently, 
\begin{eqnarray*}
 |\fU_{k-1}|\frac{\w(z_k)}{\Delta}\leq \sum_{z\in\fU_{k-1}}\w(z)\leq 2.
\end{eqnarray*}
Combining this with the bound $|\fU_{k-1}^+|\leq |\fU_{k-1}|+2\Delta$, we arrive at 
\begin{eqnarray*}
\EE[W_{k}-W_{k-1}|\cG_{k-1}] & \geq & -{\bf 1}_{(k\leq \tau)}\frac{4\Delta}{N-2k+1}\\
\EE\left[(W_{k}-W_{k-1})^2|\cG_{k-1}\right]
& \leq & {\bf 1}_{(k\leq \tau)}\frac{4\Delta\w(z_k)+2}{N-2k+1}.
\end{eqnarray*}
Now recall that  $\w(z_k)\geq \wm$ and $\h(z_k)<\frac t2$ as per our selection rule, implying 
\begin{eqnarray*}
\wm\tau & \leq & \sum_{k\geq 1}{\w(z_k)}{\bf 1}_{(\tau\geq k)}\ \leq \ 
\sum_{z\in\fF}\w(z){\bf 1}_{\left(\h(z)<\frac t2\right)}\ \leq t.
\end{eqnarray*}
The right-most inequality follows from the fact that the total weight at a given height in $\fF$ is at most $2$ (the total weight being preserved from a parent to its children, if any). 
We conclude that 
\begin{eqnarray*}
\sum_{k=1}^\tau\EE[W_{k}-W_{k-1}|\cG_{k-1}] & \geq & -\frac{4\Delta t}{\wm N-2t}:=-m\\
\sum_{k=1}^\tau\EE\left[(W_{k}-W_{k-1})^2|\cG_{k-1}\right] & \leq & \frac{4\Delta t\wm +2t}{N\wm-2t}:=v.
\end{eqnarray*}
Now, fix $\varepsilon>0$ and consider the martingale $\{M_k\}_{k\geq 0}$ defined by  $M_0=0$ and
$$M_k:=\sum_{i=1}^k\left\{(W_{i-1}-W_i)\wedge\varepsilon-\EE\left[(W_{i-1}-W_{i})\wedge\varepsilon \big|\cG_{i-1}\right]\right\}.$$
Then the increments of $\{M_k\}_{k\geq 0}$ are bounded by $\varepsilon$ by construction, and the above computation guarantees that  almost-surely,
\begin{eqnarray*}
\sum_{k=1}^{\tau}\EE\left[\left(M_k-M_{k-1}\right)^2\big|\cG_{k-1}\right] & \leq & v=N^{-\frac{1}{3}+o(1)}.
\end{eqnarray*}
Thus, the martingale version of Bennett's inequality due to \citet{freedman1975tail} yields
\begin{eqnarray}
\label{freedman}
\PP\left(M_\tau> 7\varepsilon\right)& \leq & \left(\frac{ev}{v+7\varepsilon^2}\right)^{7}=N^{-\frac{7}{3}+o(1)}.
\end{eqnarray}
But on the event $\{x\in\cR,y\in\cR\setminus\cB_x\}$, all paths from the set $\{x,y\}$ to itself must have length at least $h$, and since $h\to\infty$, we must have asymptotically
\begin{eqnarray*}
\{x\in\cR,y\in\cR\setminus\cB_x\}& \subseteq & \left\{\max_{k}(W_{k-1}-W_k)\leq \varepsilon\right\}\\
& \subseteq & \left\{W_0-W_\tau\leq M_\tau+m\right\}
\end{eqnarray*}
With (\ref{freedman}), this proves Lemma \ref{lm:collisions} since $W_0-W_\tau=2-\sum_{z\in\fU}\w(z)$ and $m=o(1)$.

\bibliographystyle{abbrvnat}
\bibliography{biblio}
\end{document}